\documentclass[12pt,reqno]{amsart}
\usepackage{amscd,amssymb,amsmath,multicol,float,scalefnt,cancel,array,stmaryrd,graphicx,tikz}
\restylefloat{table}
\newtheorem{thm}[equation]{Theorem}
\numberwithin{equation}{section}
\newtheorem{cor}[equation]{Corollary}

\newtheorem{rmk}[equation]{Remark}

\newtheorem{lem}[equation]{Lemma}

\newtheorem{prop}[equation]{Proposition}

\newtheorem{fig}[equation]{Figure}

\begin{document}
\raggedbottom \voffset=-.7truein \hoffset=0truein \vsize=8truein
\hsize=6truein \textheight=8truein \textwidth=6truein
\baselineskip=18truept

\def\mapright#1{\ \smash{\mathop{\longrightarrow}\limits^{#1}}\ }
\def\mapleft#1{\smash{\mathop{\longleftarrow}\limits^{#1}}}
\def\mapup#1{\Big\uparrow\rlap{$\vcenter {\hbox {$#1$}}$}}
\def\mapdown#1{\Big\downarrow\rlap{$\vcenter {\hbox {$\ssize{#1}$}}$}}
\def\mapne#1{\nearrow\rlap{$\vcenter {\hbox {$#1$}}$}}
\def\mapse#1{\searrow\rlap{$\vcenter {\hbox {$\ssize{#1}$}}$}}
\def\mapr#1{\smash{\mathop{\rightarrow}\limits^{#1}}}
\def\ss{\smallskip}
\def\s{\sigma}
\def\l{\lambda}
\def\vp{v_1^{-1}\pi}
\def\at{{\widetilde\alpha}}
\def\sm{\wedge}
\def\la{\langle}
\def\ra{\rangle}
\def\ev{\text{ev}}
\def\od{\text{od}}
\def\on{\operatorname}
\def\ol#1{\overline{#1}{}}
\def\spin{\on{Spin}}
\def\cat{\on{cat}}
\def\lbar{\ell}
\def\qed{\quad\rule{8pt}{8pt}\bigskip}
\def\ssize{\scriptstyle}
\def\a{\alpha}
\def\bz{{\Bbb Z}}
\def\Rhat{\hat{R}}
\def\im{\on{im}}
\def\ct{\widetilde{C}}
\def\ext{\on{Ext}}
\def\sq{\on{Sq}}
\def\eps{\epsilon}
\def\ar#1{\stackrel {#1}{\rightarrow}}
\def\br{{\bold R}}
\def\bC{{\bold C}}
\def\bA{{\bold A}}
\def\bB{{\bold B}}
\def\bD{{\bold D}}
\def\bh{{\bold H}}
\def\bQ{{\bold Q}}
\def\bP{{\bold P}}
\def\bx{{\bold x}}
\def\bo{{\bold{bo}}}
\def\si{\sigma}
\def\Vbar{{\overline V}}
\def\dbar{{\overline d}}
\def\wbar{{\overline w}}
\def\Sum{\sum}
\def\tfrac{\textstyle\frac}
\def\tb{\textstyle\binom}
\def\Si{\Sigma}
\def\w{\wedge}
\def\equ{\begin{equation}}
\def\b{\beta}
\def\G{\Gamma}
\def\L{\Lambda}
\def\g{\gamma}
\def\k{\kappa}
\def\psit{\widetilde{\Psi}}
\def\tht{\widetilde{\Theta}}
\def\psiu{{\underline{\Psi}}}
\def\thu{{\underline{\Theta}}}
\def\aee{A_{\text{ee}}}
\def\aeo{A_{\text{eo}}}
\def\aoo{A_{\text{oo}}}
\def\aoe{A_{\text{oe}}}
\def\vbar{{\overline v}}
\def\endeq{\end{equation}}
\def\sn{S^{2n+1}}
\def\zp{\bold Z_p}
\def\cR{{\mathcal R}}
\def\P{{\mathcal P}}
\def\cQ{{\mathcal Q}}
\def\cj{{\cal J}}
\def\zt{{\bold Z}_2}
\def\bs{{\bold s}}
\def\bof{{\bold f}}
\def\bq{{\bold Q}}
\def\be{{\bold e}}
\def\Hom{\on{Hom}}
\def\ker{\on{ker}}
\def\kot{\widetilde{KO}}
\def\coker{\on{coker}}
\def\da{\downarrow}
\def\colim{\operatornamewithlimits{colim}}
\def\zphat{\bz_2^\wedge}
\def\io{\iota}
\def\om{\omega}
\def\Prod{\prod}
\def\e{{\cal E}}
\def\zlt{\Z_{(2)}}
\def\exp{\on{exp}}
\def\abar{{\overline a}}
\def\xbar{{\overline x}}
\def\ybar{{\overline y}}
\def\zbar{{\overline z}}
\def\mbar{{\overline m}}
\def\nbar{{\overline n}}
\def\sbar{{\overline s}}
\def\kbar{{\overline k}}
\def\bbar{{\overline b}}
\def\et{{\widetilde E}}
\def\ni{\noindent}
\def\tsum{\textstyle \sum}
\def\coef{\on{coef}}
\def\den{\on{den}}
\def\lcm{\on{l.c.m.}}
\def\vi{v_1^{-1}}
\def\ot{\otimes}
\def\psibar{{\overline\psi}}
\def\thbar{{\overline\theta}}
\def\mhat{{\hat m}}
\def\exc{\on{exc}}
\def\ms{\medskip}
\def\ehat{{\hat e}}
\def\etao{{\eta_{\text{od}}}}
\def\etae{{\eta_{\text{ev}}}}
\def\dirlim{\operatornamewithlimits{dirlim}}
\def\gt{\widetilde{L}}
\def\lt{\widetilde{\lambda}}
\def\st{\widetilde{s}}
\def\ft{\widetilde{f}}
\def\sgd{\on{sgd}}
\def\lfl{\lfloor}
\def\rfl{\rfloor}
\def\ord{\on{ord}}
\def\gd{{\on{gd}}}
\def\rk{{{\on{rk}}_2}}
\def\nbar{{\overline{n}}}
\def\MC{\on{MC}}
\def\lg{{\on{lg}}}
\def\cH{\mathcal{H}}
\def\cS{\mathcal{S}}
\def\cP{\mathcal{P}}
\def\N{{\Bbb N}}
\def\Z{{\Bbb Z}}
\def\Q{{\Bbb Q}}
\def\R{{\Bbb R}}
\def\C{{\Bbb C}}

\def\mo{\on{mod}}
\def\xt{\times}
\def\notimm{\not\subseteq}
\def\Remark{\noindent{\it  Remark}}
\def\kut{\widetilde{KU}}

\def\*#1{\mathbf{#1}}
\def\0{$\*0$}
\def\1{$\*1$}
\def\22{$(\*2,\*2)$}
\def\33{$(\*3,\*3)$}
\def\ss{\smallskip}
\def\ssum{\sum\limits}
\def\dsum{\displaystyle\sum}
\def\la{\langle}
\def\ra{\rangle}
\def\on{\operatorname}
\def\proj{\on{proj}}
\def\od{\text{od}}
\def\ev{\text{ev}}
\def\o{\on{o}}
\def\U{\on{U}}
\def\lg{\on{lg}}
\def\a{\alpha}
\def\bz{{\Bbb Z}}
\def\eps{\varepsilon}
\def\bc{{\bold C}}
\def\bN{{\bold N}}
\def\nut{\widetilde{\nu}}
\def\tfrac{\textstyle\frac}
\def\b{\beta}
\def\G{\Gamma}
\def\g{\gamma}
\def\zt{{\Bbb Z}_2}
\def\zth{{\bold Z}_2^\wedge}
\def\bs{{\bold s}}
\def\bx{{\bold x}}
\def\bof{{\bold f}}
\def\bq{{\bold Q}}
\def\be{{\bold e}}
\def\lline{\rule{.6in}{.6pt}}
\def\xb{{\overline x}}
\def\xbar{{\overline x}}
\def\ybar{{\overline y}}
\def\zbar{{\overline z}}
\def\ebar{{\overline \be}}
\def\nbar{{\overline n}}
\def\ubar{{\overline u}}
\def\bbar{{\overline b}}
\def\et{{\widetilde e}}
\def\lf{\lfloor}
\def\rf{\rfloor}
\def\ni{\noindent}
\def\ms{\medskip}
\def\hh{{\widehat h}}
\def\kk{{\widehat k}}
\def\Dhat{{\widehat D}}
\def\abar{{\overline{a}}}
\def\minp{\min\nolimits'}
\def\mul{\on{mul}}
\def\N{{\Bbb N}}
\def\Z{{\Bbb Z}}
\def\S{\Sigma}
\def\Q{{\Bbb Q}}
\def\R{{\Bbb R}}
\def\C{{\Bbb C}}
\def\notint{\cancel\cap}
\def\cS{\mathcal S}
\def\cR{\mathcal R}
\def\el{\ell}
\def\TC{\on{TC}}
\def\GC{\on{GC}}
\def\wgt{\on{wgt}}
\def\wpt{\widetilde{p_2}}
\def\dstyle{\displaystyle}
\def\Om{\Omega}
\def\ds{\dstyle}
\def\tz{tikzpicture}
\def\zcl{\on{zcl}}
\def\Vb#1{{\overline{V_{#1}}}}
\title
{Geodesics in the configuration spaces of two points in $\R^n$}
\author{Donald M. Davis}
\address{Department of Mathematics, Lehigh University\\Bethlehem, PA 18015, USA}
\email{dmd1@lehigh.edu}
\date{February 19, 2020}

\keywords{geodesic, configuration space, topological robotics}
\thanks {2000 {\it Mathematics Subject Classification}: 53C22, 55R80, 55M30, 68T40.}

\maketitle
\begin{abstract} We determine explicit formulas for geodesics (in the Euclidean metric) in the configuration space of ordered pairs $(x,x')$ of points in $\R^n$ which satisfy $d(x,x')\ge \eps$. We interpret this as two or three (depending on the parity of $n$) geodesic motion-planning rules for this configuration space. In the associated unordered configuration space, we need not prescribe that the points stay apart by $\eps$. For this space, with a Euclidean-related metric, we show that geodesic motion-planning rules correspond to ordinary motion-planning rules on $RP^{n-1}$.
 \end{abstract}
\section{Results}\label{intro}
Recently David Recio-Mitter (\cite{RM}) introduced the notion of geodesic complexity, which is an analogue of Farber's topological complexity (\cite{Far}), but requires that paths be minimal geodesics. This is a useful requirement for efficient motion-planning algorithms. In \cite{RM} and \cite{DR}, the geodesic complexity of several spaces was determined.

Configuration spaces are of central importance in topological robotics, since they model the situation of several robots moving throughout a region. In this paper, we first consider the case of two distinguished points (or balls) moving in $\R^n$. We obtain explicit formulas for the geodesics and optimal geodesic motion-planning rules. We also consider two indistinguishable points moving in $\R^n$, and show that geodesic motion-planning rules for these correspond to ordinary motion-planning rules in real projective space $RP^{n-1}$.

Let $F(\R^n,2)$ denote the ordered configuration space of two distinct points in $\R^n$. It is a subspace of $\R^{2n}$ and is given the Euclidean metric. This space is not geodesically complete. For example, there is no geodesic from $((1,\overline 0),(\overline 0,1))$ to $((-1,\overline 0),(\overline 0,-1))$ since the linear path $\s(t)=((1-2t,\overline 0),(\overline 0,1-2t))$ has $\sigma(\frac12)\not\in F(\R^n,2)$, but there are paths in $F(\R^n,2)$ between these points arbitrarily close to $\s$. By ``geodesic,'' we will always mean ``minimal geodesic.''

For a positive number $\eps$, we consider the subspace of $F(\R^n,2)$ consisting of points $(x,x')$ for which $d(x,x')\ge\eps$. By scaling, we may assume $\eps=2$, and define
$$F_0(\R^n,2)=\{(x,x')\in F(\R^n,2):\ d(x,x')\ge2\}.$$
This can be viewed as the space of ordered pairs of disjoint open unit balls in $\R^n$. Note that $F_0(\R^n,2)$ is a manifold with boundary $\partial F_0$ consisting of points of the form $(x-u,x+u)$ with $\|u\|=1$.
In the following theorem, we  give explicit formulas for geodesics in $F_0(\R^n,2)$ between any two points.

\begin{thm}\label{mainthm} Let $P=(a,a')$ and $Q=(b,b')$ be points of $F_0(\R^n,2)$. Let
$$h=(a'-a)/2,\quad k=(b'-b)/2,\quad A=(a'+a)/2, \quad B=(b'+b)/2.$$
Let $\delta=\min\{d(tb+(1-t)a,tb'+(1-t)a'):\ 0\le t\le1\}$, the minimal distance between the two components of the linear path between $P$ and $Q$.
\begin{itemize}
\item[a.] If $\delta\ge2$, the linear path from $P$ to $Q$ is the unique geodesic between $P$ and $Q$ in $F_0(\R^n,2)$.
\item[b.] If $0<\delta\le2$, there is a unique geodesic in $F_0(\R^n,2)$ from $P$ to $Q$. It is the path composition $\ell_1\sigma\ell_2$, where $\ell_1$ is the linear path from $P$ to $C_0=(x-u,x+u)$, $\sigma$ the geodesic in $\partial F_0$ from $C_0$ to $C_1=(y-v,y+v)$, described in Proposition \ref{bdrylem}, and $\ell_2$ the linear path from $C_1$ to $Q$.
    Here $u$ and $v$ are  unique unit vectors in $\R^n$ satisfying
\begin{equation}\label{=1}h\cdot u=1\text{ and }k\cdot v=1\end{equation}
with minimal $\|u-v\|$. Let $\b$ be the angle between this $u$ and $v$ with $0\le\b<\pi$. Then
 \begin{equation}\label{xy}x=\frac{\b A+S_0B+S_1A}{\b+S_0+S_1},\quad y=\frac{\b B+S_0B+S_1A}{\b+S_0+S_1},
 \end{equation}
 where $S_0=\sqrt{\|h\|^2-1}$ and $S_1=\sqrt{\|k\|^2-1}$. If $\b=0$, then $C_0=C_1$, $\delta=2$, and $\ell_1\ell_2$ is the linear path from $P$ to $Q$. When $\delta=2$, the linear path in (a) can also be obtained by the method of (b).
\item[c.] If $\delta=0$, then $h$ and $k$ are parallel in opposite directions, and conversely. In this case,
the unit vector solutions $u,v$
 of (\ref{=1}) with minimal $\|u-v\|$ are
 $$u=\frac h{\|h\|^2}+\frac{S_0}{\|h\|}w,\quad v=\frac k{\|k\|^2}+\frac{S_1}{\|k\|}w,$$
 where $w$ ranges over the set of all points satisfying $h\cdot w=0$ and $w\cdot w=1$.
 The geodesics from $P$ to $Q$ are paths as described in (b) for each of these pairs $u,v$, using (\ref{xy}) and Corollary \ref{cor1}.
 \end{itemize}
 \end{thm}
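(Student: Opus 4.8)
The plan is to reduce the geodesic problem in $F_0(\R^n,2)$ to a more tractable one by exploiting the product-like coordinates $(A,h)$ and $(B,k)$, i.e.\ the ``center of mass'' and ``half-difference'' of a configuration $(x,x')$. Since $\|(x,x')-(x'',x''')\|^2 = \|x-x''\|^2+\|x'-x'''\|^2$, and since the affine change of coordinates $(x,x')\mapsto(\tfrac{x+x'}{2},\tfrac{x'-x}{2})$ scales the Euclidean metric by a constant factor on each of the two $\R^n$ summands, a path is a geodesic in $F_0(\R^n,2)$ iff its image under this change of coordinates is a geodesic in the image region $\{(A,h):\|h\|\ge1\}=\R^n\times(\R^n\setminus\mathrm{int}\,D^n)$, again with a (rescaled) Euclidean metric. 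So first I would record this reduction and fix the metric normalization; the region is $\R^n$ times the exterior of the open unit ball, a manifold with boundary $\R^n\times S^{n-1}$.

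Next I would analyze geodesics in $\R^n\times(\R^n\setminus\mathrm{int}\,D^n)$. Away from the boundary (where $\|h\|>1$ strictly), the region is open in $\R^{2n}$, so geodesics are straight line segments; this immediately gives part (a), since $\delta\ge2$ means (after normalization) the linear path keeps $\|h\|\ge1$ throughout. For the general case, a minimal geodesic that is not a straight segment must touch the boundary, and on the portion where it lies on the boundary it must be a geodesic of the submanifold $\R^n\times S^{n-1}$ in the induced metric (this is the content of Proposition~\ref{bdrylem}, which I am permitted to assume). A standard first-variation/convexity argument shows that an optimal path consists of at most three pieces: a straight segment from $P$ to a boundary point $C_0$, a boundary geodesic from $C_0$ to $C_1$, and a straight segment from $C_1$ to $Q$; moreover the straight pieces must meet the boundary ``tangentially enough'' that no shortcut through the exterior exists, which forces the incoming/outgoing segments to be tangent to the sphere factor at $C_0$ and $C_1$ respectively. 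Tangency in the $h$-coordinate at $C_0=(x-u,x+u)$ means the $h$-component of $\ell_1$, which runs from $h$ to $u$, is perpendicular to $u$ at the endpoint; since $u$ is a unit vector this is exactly the condition $(h-u)\cdot u=0$, i.e.\ $h\cdot u=1$, and similarly $k\cdot v=1$. This explains why $u,v$ are constrained by \eqref{=1}.

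Then I would do the optimization. Given that $u$ and $v$ must satisfy \eqref{=1}, the total length is the length of the three legs, and I would compute each: the $h$-leg of $\ell_1$ has length $\sqrt{\|h\|^2-1}=S_0$ (the tangent-segment length from the exterior point $h$ to the unit sphere, by Pythagoras), the $h$-leg of $\ell_2$ has length $S_1$, and the boundary geodesic's $S^{n-1}$-component has length equal to the angle $\b$ between $u$ and $v$. The $A$-components (the $\R^n$ factor) contribute a length that, for fixed endpoints of each leg, is minimized by going straight; writing $x$ for the common $A$-value at $C_0$ and $y$ for the $A$-value at $C_1$, the $A$-path is the three-segment polyline $A\to x\to y\to B$ and one wants to minimize $\|x-A\|+\|y-x\|+\|B-y\|$ subject to nothing (x,y free) — but the subtlety is that the three legs are traversed in lock-step with the $h$/sphere legs, so the actual quantity to minimize is the length of the combined path in $\R^n\times(\text{leg})$, i.e.\ one minimizes $\sqrt{S_0^2+\|x-A\|^2}+\sqrt{\b^2+\|y-x\|^2}+\sqrt{S_1^2+\|B-y\|^2}$ over $x,y\in\R^n$ and over the choice of $u,v$ satisfying \eqref{=1}. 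For fixed $S_0,S_1,\b$, the function of $(x,y)$ is strictly convex, so there is a unique minimizer, and setting the gradient to zero gives that the three ``velocity directions'' in the $A$-coordinate are proportional with the proportionality constants $S_0:\b:S_1$ built in — solving the resulting linear system yields precisely \eqref{xy}. Finally, minimizing over $u,v$: $S_0,S_1$ are fixed (they depend only on $\|h\|,\|k\|$), so one only needs to minimize $\b$, equivalently $\|u-v\|$, over unit vectors with $h\cdot u=1$, $k\cdot v=1$; this is the ``minimal $\|u-v\|$'' clause. When $h,k$ are not antiparallel this minimizer is unique (part b), and when $h=-\|h\|\|k\|^{-1}k$ are antiparallel (which is exactly when $\delta=0$, since then the linear path's two components collide) the solution set is a whole sphere's worth of $w\perp h$, giving the family in part (c); the explicit formula for $u,v$ there is just the decomposition of a unit vector with $h\cdot u=1$ into its component along $h$ (forced to be $h/\|h\|^2$) plus a unit-normalized orthogonal part scaled by $S_0/\|h\|$ so that $\|u\|=1$. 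The boundary cases $\b=0$ (then $C_0=C_1$ and the polyline degenerates to the linear path) and $\delta=2$ (the linear path is tangent, recoverable from (b)) I would check directly from the formulas.

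The main obstacle I expect is the rigorous justification that an optimal path has exactly the three-piece structure with tangential matching at $C_0,C_1$ — that is, ruling out geodesics that touch the boundary in more complicated ways (e.g.\ along an interval, then leave, then return) and proving the tangency (transversality) conditions at the switching points. This is a first-variation argument combined with the observation that the exterior-of-a-ball region is ``convex enough'' away from the obstacle; once the three-piece ansatz and \eqref{=1} are in hand, the rest is the convex optimization above, which is routine. I would also need the explicit description of boundary geodesics from Proposition~\ref{bdrylem} to know that the middle piece contributes exactly $\b$ to the length and travels ``straight'' in the $A$-direction at constant speed, matching the polyline computation.
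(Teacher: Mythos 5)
Your proposal is correct and follows essentially the same route as the paper: pass to the midpoint/half-difference coordinates $(A,h)$, reduce to the region $\R^n\times(\R^n\setminus\mathrm{int}\,D^n)$, take a three-piece path $\ell_1\sigma\ell_2$ with the tangency constraint $h\cdot u=1=k\cdot v$, minimize $\sqrt{S_0^2+\|x-A\|^2}+\sqrt{\b^2+\|x-y\|^2}+\sqrt{S_1^2+\|y-B\|^2}$ over $x,y$ to get (\ref{xy}), and then minimize the angle $\b$ (equivalently $\|u-v\|$), with the antiparallel case $\delta=0$ giving the sphere's worth of solutions in (c). The one step you flag as the main obstacle --- that a minimal geodesic has at most one boundary arc, met tangentially --- is exactly what the paper supplies, via Alexander's structure theorem for geodesics in manifolds-with-boundary together with a short direct argument that two boundary arcs cannot occur and a first-variation computation ($\partial T/\partial\a$) showing $h\cdot u>1$ cannot be critical.
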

 \noindent We will refer to these as type (a), (b), or (c) paths or Situations. In Proposition \ref{minl} and Corollary \ref{cor1} we give explicit formulas for $u$, $v$, and $\b$  in terms of $h$ and $k$.

 The geodesics in $\partial F_0$ to which we just referred are described in the following result.
 \begin{prop}\label{bdrylem} Let $u$ and $v$ be unit vectors in $\R^n$, and $\a$ be the angle from $u$ to $v$ with $0\le\a<\pi$. For $0\le t\le 1$, let
$$u(t)=\frac{\sin((1-t)\a)u+\sin(t\a)v}{\sin\a}$$
if $\a>0$. If $\a=0$, then $u(t)=u=v$ for all $t$.
For $x,y\in\R^n$, the unique geodesic in $\partial F_0$ from $(x-u,x+u)$ to $(y-v,y+v)$ is the curve
$$\sigma(t)=((1-t)x+ty-u(t),(1-t)x+ty+u(t)).$$
Its length is \begin{equation}\label{len}\sqrt{2(\|x-y\|^2+\a^2)}.\end{equation}
 \end{prop}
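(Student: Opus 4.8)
The plan is to present $\partial F_0$, with the metric induced from $\R^{2n}$, as a Riemannian product and to read off its minimal geodesics one factor at a time. First I would parametrize $\partial F_0$ by the diffeomorphism $\Phi\colon\R^n\times S^{n-1}\to\partial F_0$, $\Phi(x,u)=(x-u,x+u)$, whose inverse sends $(p,p')$ to $\bigl(\tfrac12(p+p'),\tfrac12(p'-p)\bigr)$. Differentiating, a tangent vector $(\dot x,\dot u)$ with $\dot u\perp u$ is sent to $(\dot x-\dot u,\dot x+\dot u)\in\R^{2n}$, whose squared Euclidean norm is $\|\dot x-\dot u\|^2+\|\dot x+\dot u\|^2=2\|\dot x\|^2+2\|\dot u\|^2$, the cross terms cancelling. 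Thus $\Phi$ is an isometry onto $\partial F_0$ from the Riemannian product $(\R^n,2g_{\mathrm{eucl}})\times(S^{n-1},2g_{\mathrm{round}})$, which is in particular complete, so minimal geodesics between any two of its points exist.

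Next I would invoke the standard description of geodesics in a Riemannian product $M\times N$: one has $d\bigl((p_1,q_1),(p_2,q_2)\bigr)^2=d_M(p_1,p_2)^2+d_N(q_1,q_2)^2$, a minimal geodesic on $[0,1]$ between these points is precisely a pair $(\g_1,\g_2)$ in which each $\g_i$ is a constant-speed minimal geodesic of the corresponding factor, and this minimal geodesic is unique exactly when the minimizers in the two factors are unique. For the inequality underlying this: if $\g=(\g_1,\g_2)$ joins $(p_1,q_1)$ to $(p_2,q_2)$ then $L(\g)=\int_0^1\sqrt{|\dot\g_1|^2+|\dot\g_2|^2}\,dt\ge\bigl|\int_0^1(|\dot\g_1|,|\dot\g_2|)\,dt\bigr|=\sqrt{L(\g_1)^2+L(\g_2)^2}\ge\sqrt{d_M^2+d_N^2}$, using the integral triangle inequality in $\R^2$; equality forces $(|\dot\g_1|,|\dot\g_2|)$ to have constant direction and each $\g_i$ to be minimal, and the curve $\sigma$ below realizes the bound.

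It then remains to identify the two factor geodesics between the prescribed endpoints. In $(\R^n,2g_{\mathrm{eucl}})$ the unique minimal geodesic from $x$ to $y$ is the affine path $t\mapsto(1-t)x+ty$, of constant speed $\sqrt2\,\|x-y\|$. In $(S^{n-1},2g_{\mathrm{round}})$, for $u,v$ making an angle $\a\in[0,\pi)$, the minimal geodesic is the great-circle arc, and I would verify by a short computation that the stated $u(t)=\bigl(\sin((1-t)\a)\,u+\sin(t\a)\,v\bigr)/\sin\a$ does the job: $u(0)=u$, $u(1)=v$, and $\|u(t)\|\equiv1$, $\|\dot u(t)\|\equiv\a$ — both norm expressions collapse to $\sin^2\a$ after applying $2\sin A\sin B=\cos(A-B)-\cos(A+B)$, $2\cos A\cos B=\cos(A-B)+\cos(A+B)$, and $\cos X+\cos Y=2\cos\tfrac{X+Y}2\cos\tfrac{X-Y}2$, together with $u\cdot v=\cos\a$ — so $u(t)$ is the constant-speed great-circle arc, the unique minimizer since $\a<\pi$; when $\a=0$ it degenerates to the constant path $u=v$. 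Transporting the product geodesic through $\Phi$ gives exactly $\sigma(t)=\bigl((1-t)x+ty-u(t),\,(1-t)x+ty+u(t)\bigr)$; it has constant speed, with $\|\dot\sigma(t)\|^2=2\|x-y\|^2+2\|\dot u(t)\|^2=2\|x-y\|^2+2\a^2$, so its length is $\sqrt{2(\|x-y\|^2+\a^2)}$, which is (\ref{len}).

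I expect essentially all of the genuine work to lie in the first and third paragraphs: the cancellation of cross terms that exhibits $\partial F_0$ as a scaled product, and the trigonometric verification that the spherical interpolation formula $u(t)$ has unit norm and constant speed $\a$. Once those are in hand, existence, the explicit formula for $\sigma$, its uniqueness, and the length formula are all immediate consequences of the product structure together with the classical facts that $\R^n$ has a unique minimal geodesic between any two points and that $S^{n-1}$ has a unique minimal geodesic between points at angular distance less than $\pi$.
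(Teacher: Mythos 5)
Your proof is correct, but it takes a genuinely different route from the paper's. The paper argues directly on the curve $\sigma$: it computes $\sigma'(t)$ to verify constant speed (which gives the length formula), computes $\sigma''(t)$ and checks it is normal to $\partial F_0$ (orthogonal to the $x$-directions and radial in the sphere directions), so $\sigma$ is a geodesic, and then obtains uniqueness by observing that the family of such curves realizes every tangent direction at the initial point. You instead exhibit the global structure first: the cross-term cancellation $\|\dot x-\dot u\|^2+\|\dot x+\dot u\|^2=2\|\dot x\|^2+2\|\dot u\|^2$ shows that $(x,u)\mapsto(x-u,x+u)$ is an isometry from the Riemannian product $(\R^n,2g_{\mathrm{eucl}})\times(S^{n-1},2g_{\mathrm{round}})$ onto $\partial F_0$, after which existence, uniqueness, minimality, and the Pythagorean length formula $\sqrt{2(\|x-y\|^2+\a^2)}$ all follow from the classification of minimal geodesics in a product together with the standard facts about lines in $\R^n$ and minor great-circle arcs on $S^{n-1}$ (your slerp verification that $\|u(t)\|\equiv1$ and $\|\dot u(t)\|\equiv\a$ is the same trigonometric computation the paper does when it expands $\cos(\a-t\a)$). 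What each approach buys: the paper's is self-contained calculus on the ambient $\R^{2n}$, but the normality of $\sigma''$ only certifies that $\sigma$ is locally a geodesic, so the minimality and uniqueness assertions lean on the terse final sentence about tangent directions; your product decomposition delivers global minimality and uniqueness in one stroke, explains structurally why the length formula decouples into a translational and a rotational part, and is essentially the same splitting the author later reuses in Section \ref{sec4} for the alternative metric on $F(\R^n,2)$. The cost is that you must quote (or prove, as you sketch via the integral triangle inequality) the product-geodesic lemma, but that argument is complete as you state it.
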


 Recall that the geodesic complexity $\GC(X)$ is the smallest $k$ such that $X\times X$ can be partitioned into ENRs $E_0,\ldots,E_k$ such that on each $E_i$ there is a continuous map $s_i$ from $E_i$ to the free path space $PX$ such that $s_i(x_0,x_1)$ is a geodesic from $x_0$ to $x_1$. (\cite{RM}) The topological complexity $\TC(X)$ is defined similarly without requiring that the paths be geodesics.(\cite{Far}) Our second result is the determination of $\GC(F_0(\R^n,2))$.
 \begin{thm}\label{GCthm} For $n\ge2$,
 $$\GC(F_0(\R^n,2))=\TC(F_0(\R^n,2))=\TC(S^{n-1})=\begin{cases}1&n\text{ even}\\ 2&n\text{  odd.}\end{cases}$$
 \end{thm}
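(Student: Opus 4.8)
The plan is to first establish the homotopy equivalence $F_0(\R^n,2)\simeq S^{n-1}$, then prove the topological-complexity equalities, and finally upgrade the lower bound $\TC\le\GC$ to an equality by exhibiting explicit geodesic motion-planning rules built from Theorem \ref{mainthm}. For the homotopy type: the map $F_0(\R^n,2)\to S^{n-1}$ sending $(x,x')$ to $(x'-x)/\|x'-x\|$ is a deformation retraction onto the boundary-type configuration $(-u,u)$ with $\|u\|=1$ (slide $x,x'$ together along the line joining them until $\|x'-x\|=2$, then translate the midpoint to the origin); this gives $F_0(\R^n,2)\simeq S^{n-1}$. Hence $\TC(F_0(\R^n,2))=\TC(S^{n-1})$, and by the classical computation of Farber the latter equals $1$ for $n$ even and $2$ for $n$ odd (from the parallelizability, resp.\ non-parallelizability, of spheres, or equivalently the zero-divisor-cup-length argument in $H^*(S^{n-1}\times S^{n-1})$). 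Since geodesic motion-planning rules are in particular motion-planning rules, $\GC(F_0(\R^n,2))\ge\TC(F_0(\R^n,2))$ always, so it remains only to construct geodesic rules realizing the upper bound.

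For the upper bound, I would partition $F_0(\R^n,2)\times F_0(\R^n,2)$ according to the ``Situation'' of Theorem \ref{mainthm}. When $\delta\ge2$ (type (a)) the unique geodesic is the linear path, which depends continuously on $(P,Q)$; this is a single continuous geodesic section over a closed set. When $0<\delta<2$ (type (b)) the geodesic is the concatenation $\ell_1\sigma\ell_2$, and the data $(u,v,\b,x,y)$ are given by the explicit continuous formulas in Proposition \ref{minl}, Corollary \ref{cor1} and equations (\ref{=1}), (\ref{xy}); since $u$ and $v$ are the \emph{unique} minimizers of $\|u-v\|$ subject to $h\cdot u=1$, $k\cdot v=1$ (uniqueness holding precisely because $h,k$ are not antiparallel), the resulting geodesic varies continuously, giving another continuous section. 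The remaining locus $\delta=0$ — where $h$ and $k$ are antiparallel — is exactly the set where the minimizing pair $(u,v)$ is not unique: $w$ ranges over a sphere $S^{n-2}$. Over this locus the choice of $w$, and hence the geodesic, must be made continuously, and this is where the parity of $n$ enters: choosing $w$ continuously as a function of the antiparallel direction $h/\|h\|\in S^{n-1}$ amounts to a section of (a bundle equivalent to) the unit tangent bundle of $S^{n-1}$, which exists globally iff $S^{n-1}$ is parallelizable. When $n$ is even we can make the choice over all of $\delta=0$ at once (one section), so the whole product is covered by essentially the three sets above, which can be reorganized into two ENRs with continuous geodesic sections — in fact into one, matching $\GC=1$; when $n$ is odd we need an extra piece for $\delta=0$, giving $\GC\le2$.

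The main obstacle I expect is the gluing/continuity bookkeeping at the interfaces $\delta=2$ and $\delta\to0$: the formulas of type (a) and type (b) must agree on $\delta=2$ (the theorem asserts this, and Proposition \ref{bdrylem} with $\b=0$ degenerates correctly), and one must check that as $\delta\to0$ the type-(b) geodesic does not converge — rather it is genuinely the $\delta=0$ locus that is open in its complement, so the partition into ENRs is clean. More precisely, I would define $E_0$ to be the set where a globally continuous geodesic rule (linear or (b)-type with the canonical $u,v$) is available — this is an open dense ENR — and $E_1$ (and, for $n$ odd, a further stratification forced by the non-existence of a global section of the tangent sphere bundle) to carry the remaining geodesics; verifying that these pieces are ENRs and that the sections are continuous, especially the dependence of $\sigma$ in Proposition \ref{bdrylem} on $(u,v,x,y)$ near $\a=0$, is the delicate point. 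Once that is in hand, counting the pieces against the known value of $\TC(S^{n-1})$ forces equality in both the even and odd cases, completing the proof. \qed
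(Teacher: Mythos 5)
Your proposal follows essentially the same route as the paper: the homotopy equivalence $F_0(\R^n,2)\simeq S^{n-1}$ gives the lower bound $\GC\ge\TC=\TC(S^{n-1})$; the type-(a) and type-(b) loci are merged into a single ENR carrying the unique, continuously varying geodesic (with the Pasting Lemma applied along $\delta=2$, where the last two sentences of Theorem \ref{mainthm}(b) guarantee the two descriptions agree); and the type-(c) locus is handled by choosing the vector $w$ of Theorem \ref{mainthm}(c) via a tangent vector field on $S^{n-1}$, with one piece sufficing for $n$ even and two pieces needed for $n$ odd.

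There is, however, one genuine error to fix. You assert that a global continuous choice of $w$ (equivalently, of a unit tangent vector at $h/\|h\|\in S^{n-1}$) exists ``iff $S^{n-1}$ is parallelizable.'' What is actually needed is only a single nowhere-vanishing vector field, which exists iff $\chi(S^{n-1})=0$, i.e.\ iff $n-1$ is odd, i.e.\ iff $n$ is even; parallelizability is far too strong a criterion (for example $S^5$ is not parallelizable, so your statement taken literally would force a second piece on the $\delta=0$ locus for $n=6$ and destroy the upper bound $\GC\le1$ for most even $n$). The same correction applies to your parenthetical justification of the value of $\TC(S^{n-1})$. A smaller point: under the paper's convention $\GC=1$ corresponds to a partition into \emph{two} ENRs, so your remark that the pieces can be reorganized ``in fact into one'' is an indexing slip; the correct count for $n$ even is two pieces ($E_1$ for types (a)--(b) and $E_0$ for type (c)), and for $n$ odd three pieces, the third arising exactly because the vector field on $S^{n-1}$ must omit a point, as in your outline and as carried out in the paper via the set $Z$ of pairs whose difference vectors are multiples of a fixed direction.
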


The unordered configuration space $C(\R^2,2)$ is the quotient of $F(\R^n,2)$ by the involution which reverses the order of the two points. Points of $C(\R^n,2)$ are sets $\{a,a'\}$ with $a, a'\in\R^n$. Surprisingly, $C(\R^n,2)$ is, in some sense, easier for these considerations than $F(\R^n,2)$.
\begin{thm}\label{Cthm} With $d$ denoting the Euclidean metric in $\R^n\times \R^n$, defining
$$d_U(\{ a,a'\},\{b,b'\})=\min(d((a,a'),(b,b')),\ d((a,a'),(b',b)))$$
gives a metric on $C(\R^n,2)$ which has linear geodesics between any two points.\end{thm}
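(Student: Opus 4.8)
The plan is to recognize $d_U$ as the quotient pseudometric that the Euclidean metric on $F(\R^n,2)\subset\R^n\times\R^n$ induces on $C(\R^n,2)=F(\R^n,2)/(\Z/2)$, where $\Z/2$ acts by the fixed-point-free isometry $\tau(a,a')=(a',a)$. Since $\tau$ is an isometry of $\R^n\times\R^n$ and the $\tau$-orbit of a lift $(b,b')$ of $\{b,b'\}$ is exactly $\{(b,b'),(b',b)\}$, the displayed formula equals $\min$ of $d$ over all pairs of lifts of $\{a,a'\}$ and $\{b,b'\}$, so in particular it does not depend on the chosen representatives. From this description the metric axioms are routine: symmetry is immediate; $d_U(P,Q)=0$ forces a lift of $P$ to coincide with a lift of $Q$, hence $P=Q$ (here freeness of $\tau$ is used, though it is automatic since $a\ne a'$); and for the triangle inequality one relabels the three points so that $d_U$ is realized on the first pair by representatives $(a,a'),(b,b')$ and on the second pair by $(b,b'),(c,c')$, and then combines the Euclidean triangle inequality with $d_U(\{a,a'\},\{c,c'\})\le d((a,a'),(c,c'))$.

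The substantive point is to decide which of the two pairings of representatives is the efficient one. A one-line expansion gives
\[
d((a,a'),(b',b))^2-d((a,a'),(b,b'))^2=2\,(a-a')\cdot(b-b'),
\]
so, after possibly interchanging $b$ and $b'$, I may assume $(a-a')\cdot(b-b')\ge0$, and then $d_U(\{a,a'\},\{b,b'\})=\|(a,a')-(b,b')\|$.

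Next I would check that the Euclidean straight segment $\tilde\gamma(t)=(1-t)(a,a')+t(b,b')$ in $\R^n\times\R^n$ never leaves $F(\R^n,2)$. Its two $\R^n$-coordinates differ by $(1-t)(a-a')+t(b-b')$, whose squared length
\[
(1-t)^2\|a-a'\|^2+2t(1-t)(a-a')\cdot(b-b')+t^2\|b-b'\|^2
\]
is, under the hypothesis $(a-a')\cdot(b-b')\ge0$, a sum of nonnegative terms that is strictly positive for every $t\in[0,1]$ because $a\ne a'$ and $b\ne b'$. Hence $\tilde\gamma$ descends to a path $\gamma$ in $C(\R^n,2)$ from $\{a,a'\}$ to $\{b,b'\}$ which lifts to a straight line, i.e. the desired ``linear'' path.

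It remains to see that $\gamma$ is a minimal geodesic for $d_U$. For any partition $0=t_0<\cdots<t_m=1$, using $d_U(\bar x,\bar y)\le d(x',y')$ for arbitrary lifts $x',y'$ together with the fact that $\tilde\gamma$ is a straight segment, the sum $\sum_i d_U(\gamma(t_{i-1}),\gamma(t_i))$ is at most $\sum_i\|\tilde\gamma(t_i)-\tilde\gamma(t_{i-1})\|=\|(a,a')-(b,b')\|=d_U(\{a,a'\},\{b,b'\})$; taking the supremum over partitions bounds the $d_U$-length of $\gamma$ by $d_U(\{a,a'\},\{b,b'\})$, while the reverse inequality holds for any path. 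Thus $\gamma$ realizes the distance and is a linear minimal geodesic. I expect the only step requiring a genuine idea rather than bookkeeping to be the collision-avoidance above: it is precisely the \emph{optimal} choice of pairing of the unordered pairs, encoded by the sign condition $(a-a')\cdot(b-b')\ge0$, that forces the straight interpolation to keep the two points distinct.
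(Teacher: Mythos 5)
Your proposal is correct and follows essentially the same route as the paper: the same relabeling argument for the triangle inequality, and the same key identity $d((a,a'),(b',b))^2-d((a,a'),(b,b'))^2=2(a-a')\cdot(b-b')$ (the paper's (\ref{bb})) to show that the distance-realizing pairing makes the straight segment avoid the diagonal. The only cosmetic differences are that you run the collision-avoidance step directly (positivity of the quadratic under the sign condition) where the paper argues by contradiction via antiparallel difference vectors, and that you add an explicit partition argument that the linear path is length-minimizing, which the paper leaves implicit.
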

So, we need not bother with the intricacies for geodesics in $F(\R^n,2)$ caused by the need to keep points at least a certain distance apart.
The space $C(\R^n,2)$ has the homotopy type of $RP^{n-1}$, and so the following result, which we prove  in Section \ref{unorsec}, may not be surprising. The proof will show that the geodesics in $C(\R^n,2)$ are obtained from not-necessarily-geodesic paths in $RP^{n-1}$.
\begin{thm}\label{GCCthm} For $n\ge2$, $\GC(C(\R^n,2))=\TC(RP^{n-1})$.\end{thm}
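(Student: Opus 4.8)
The plan is to prove $\GC(C(\R^n,2))\ge\TC(RP^{n-1})$ by soft arguments and the reverse inequality by an explicit construction converting a motion planner on $RP^{n-1}$ into a geodesic motion planner on $C(\R^n,2)$. For the lower bound: a geodesic motion-planning partition of $C(\R^n,2)^2$ is in particular an ordinary one, so $\GC(C(\R^n,2))\ge\TC(C(\R^n,2))$; and since $\TC$ is a homotopy invariant while $C(\R^n,2)\simeq RP^{n-1}$ (the map $\{a,a'\}\mapsto[a-a']$ is a fibre bundle with contractible fibre $\R^n\times(0,\infty)$, hence a homotopy equivalence, with inverse $[u]\mapsto\{-u,u\}$), the right-hand side equals $\TC(RP^{n-1})$.

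For the upper bound, write $q\colon C(\R^n,2)\to RP^{n-1}$, $q(\{a,a'\})=[a-a']$, and start from a motion planner on $RP^{n-1}$ with domains $U_0,\dots,U_k$, $k=\TC(RP^{n-1})$, and continuous sections $s_i\colon U_i\to P(RP^{n-1})$. Pulling the $U_i$ back along $q\times q$ partitions $C(\R^n,2)^2$ into the same number of ENRs, and on each I would define a geodesic section as follows. Given $(\{a,a'\},\{b,b'\})$, feed the axis-lines $([a-a'],[b-b'])$ to the $RP^{n-1}$-planner to obtain a (not necessarily geodesic) path $\gamma$ between them; lift $\gamma$ to $S^{n-1}$ starting at the unit vector $u=(a-a')/\|a-a'\|$ determined by an ordering $(a,a')$ of the pair — reversing the ordering negates the lift, so the output below is independent of the choice — and call the endpoint of the lift $w\in S^{n-1}$. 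Pair $a$ with $\tilde b:=\tfrac{b+b'}2+\tfrac{\|b-b'\|}2 w$ and $a'$ with the remaining point $\tilde b'$, and output the image in $C(\R^n,2)$ of the straight-line path in $\R^{2n}$ from $(a,a')$ to $(\tilde b,\tilde b')$. This depends continuously on the pair (lifting, and forming the line segment, are continuous), and by Theorem~\ref{Cthm} the straight-line path is a minimal geodesic of $C(\R^n,2)$ \emph{exactly when the chosen pairing is the length-minimizing one}, i.e.\ when $(a-a')\cdot(\tilde b-\tilde b')\ge0$, equivalently $u\cdot w\ge0$.

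The main obstacle is securing this last condition: $u\cdot w\ge0$ holds precisely when $\gamma$ lies in the homotopy class, rel endpoints, of a \emph{minimal} $RP^{n-1}$-geodesic between the two lines, so one must arrange that the $RP^{n-1}$-planner — and hence its transfer — outputs only such ``short'' paths, while staying within $\TC(RP^{n-1})+1$ pieces. I would treat separately the open locus $N\subseteq C(\R^n,2)^2$ of pairs whose axes are not perpendicular, where the minimum defining $d_U$ is attained by a unique pairing and hence the minimal geodesic (a line segment, by Theorem~\ref{Cthm}) is unique and varies continuously, so the forced section already works there with no choice needed; and the locus $O$ of perpendicular-axis pairs, where both pairings are length-minimizing and so any $\gamma$ is admissible. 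The delicate point is that the length-minimizing pairing switches as one crosses $O$, so the forced section on $N$ does not extend continuously over $O$; the pieces meeting $O$ must be compatible with the nontrivial $\Z/2$ double cover of $C(\R^n,2)^2$ given by the space of pairings (classified by the sum of the axis-line Stiefel--Whitney classes $w_1$ of the two factors), and cannot in general be isolated as a single separate piece. The essential remaining work is to produce a partition into $\TC(RP^{n-1})+1$ ENRs, intertwining the regions near $O$ with the rest, on which the minimal-geodesic ``fibration'' $GC\to C(\R^n,2)^2$ (single-valued over $N$, the pairing double cover over $O$) admits continuous sections — an obstruction-theoretic point that uses $C(\R^n,2)\simeq RP^{n-1}$ together with the structure of motion planners on real projective space. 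That, rather than the lifting recipe, is where the content lies; everything else — the lower bound, the recipe, and the verification via Theorem~\ref{Cthm} that a line segment with the length-minimizing pairing is a geodesic — is routine.
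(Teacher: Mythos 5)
Your lower bound and your structural analysis are both correct, and they match the paper's: off the perpendicular-axis locus the minimizing pairing is unique and the linear geodesic varies continuously, while on the perpendicular locus both pairings give geodesics of equal length and the problem reduces to a continuous choice of pairing, governed by the orientation double cover. But the proposal stops exactly at the step that carries the content of the theorem: you explicitly defer ``produce a partition into $\TC(RP^{n-1})+1$ ENRs \dots an obstruction-theoretic point'' to ``essential remaining work.'' That is a genuine gap, not a routine verification, and as posed (pull back an arbitrary optimal planner on $RP^{n-1}$ and hope its paths are short over the non-perpendicular locus) it does not go through: a general section $s_i$ need not output paths homotopic rel endpoints to minimal geodesics, so your $u\cdot w\ge 0$ condition can fail on the pulled-back pieces.

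The resolution in the paper is simpler than the obstruction-theoretic picture you sketch, and it avoids your worry about ``intertwining the regions near $O$ with the rest.'' Since $\GC$ is defined via a partition into ENRs rather than an open cover, no compatibility across the boundary of $N$ and $O$ is needed. Take an optimal motion planner on $RP^{n-1}\times RP^{n-1}$ whose zeroth domain $D_0$ is exactly the set of non-perpendicular pairs of lines (rotate through the smaller arc there); this domain is never used for the pairing problem, because its preimage is precisely the locus $N=E_0$ where the unique minimizing linear geodesic already gives a continuous rule with no choice required. The perpendicular locus $O=E_1$ maps into $D_1\cup\dots\cup D_{t_n}$, and on each $E_i=$ preimage of $D_i$ the path $s_i$ transports an orientation of the first axis to the second, which selects a pairing continuously; since on $O$ \emph{both} pairings are length-minimizing, shortness of $s_i$ is irrelevant there. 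This gives $1+t_n$ pieces total. So the missing ingredient is not obstruction theory but the observation that the ``free'' domain $D_0$ of the projective-space planner can be aligned with the locus where the geodesic is forced, so that the remaining $t_n$ domains are exactly enough to resolve the pairing ambiguity on the perpendicular locus.
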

\noindent By \cite{FTY}, $\TC(RP^n)$ equals the immersion dimension of $RP^n$ unless $n=1$, 3, or 7, but this does not enter into our proof.

In Section \ref{sec4}, we consider a different metric on $F(\R^n,2)$ in which it is geodesically complete, and discuss geodesics in that metric.
\section{Proof of Theorem \ref{mainthm}}\label{pfsec}
The following proof of Proposition \ref{bdrylem} benefited from ideas of David L.~Johnson.
\begin{proof}[Proof of Proposition \ref{bdrylem}]
With $u\cdot v=\cos\a$ and $u\cdot u=1=v\cdot v$, $u(t)=c_0u+c_1v$ is obtained by solving $u\cdot(c_0u+c_1v)=\cos(t\a)$ and $(c_0 u+c_1 v)\cdot(c_0u+c_1v)=1$. We have
\begin{eqnarray*}\sigma'(t)&=&(-x+y-\tfrac{\a}{\sin\a}(-\cos((1-t)\a)u+\cos(t\a)v),\\
&&-x+y+\tfrac{\a}{\sin\a}(-\cos((1-t)\a)u+\cos(t\a)v)).\end{eqnarray*}
Expanding $\cos(\a-t\a)$ yields
$\|-\cos((1-t)\a)u+\cos(t\a)v\|^2=\sin^2\a,$ and hence $\|\sigma'(t)\|^2=2(\|x-y\|^2+\a^2)$, which implies the claim about the length of the curve.

A constant-speed curve $\sigma$ with $\sigma''$ orthogonal to the surface is a geodesic. We have
$$\sigma''(t)=\tfrac{\a^2}{\sin\a}(\sin((1-t)\a)u+\sin(t\a)v,-(\sin((1-t)\a)u+\sin(t\a)v)).$$
The surface $\partial F_0$ is parametrized by
$X(x,u)=(x-u,x+u)$ with $x\in\R^n$ and $u\in S^{n-1}$. Then $\sigma''(t)$ is orthogonal to the $x$-directions and is orthogonal to the spherical parameter $u$ since it is a multiple of the radius vector at each point.

Since $\sigma'(0)=(-x+y-\frac\a{\sin\a}(-\cos(\a) u+v),-x+y+\frac\a{\sin\a}(-\cos(\a) u+v))$, every tangent direction from the initial point $(x-u,x+u)$ is obtained for one of our geodesics, showing that they are unique.
\end{proof}

The following lemma will be very important to our analysis.
\begin{lem}\label{anglelem} Let  $(a,a')\in F_0(\R^n,2)$, and let $h=(a'-a)/2\in\R^n$. If $u$ is a unit vector in $\R^n$ and $x\in\R^n$, the segment between $(a,a')$ and $(x-u,x+u)$ lies in $F_0(\R^n,2)$ iff $h\cdot u\ge1$.
\end{lem}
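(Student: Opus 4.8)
The plan is to reduce the containment condition for the whole segment to a single scalar inequality, and then observe that this inequality factors in a way that makes it transparent.

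First I would parametrize the segment from $(a,a')$ to $(x-u,x+u)$ by $\sigma(t)=((1-t)a+t(x-u),\ (1-t)a'+t(x+u))$ for $0\le t\le1$. The point is that the separation vector between the two coordinates of $\sigma(t)$ is $(1-t)(a'-a)+2tu=2\bigl((1-t)h+tu\bigr)$, which involves neither $x$ nor the ``center'' motion at all. Hence $\sigma(t)\in F_0(\R^n,2)$ for every $t\in[0,1]$ if and only if $\phi(t):=\|(1-t)h+tu\|^2\ge1$ for every $t\in[0,1]$.

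Next I would expand, using $\|u\|=1$, to get $\phi(t)=(1-t)^2\|h\|^2+2t(1-t)(h\cdot u)+t^2$, and then — the one slightly clever step — rewrite $\phi(t)-1=(1-t)\bigl[(1-t)(\|h\|^2-1)+2t(h\cdot u-1)\bigr]$, using $t^2-1=-(1-t)(1+t)$. Since $(a,a')\in F_0(\R^n,2)$ forces $\|h\|\ge1$, the bracketed quantity is an affine function of $t$ which is nonnegative at $t=0$; an affine function on $[0,1]$ that is nonnegative at the left endpoint is nonnegative throughout $[0,1]$ exactly when it is nonnegative at $t=1$, i.e.\ exactly when $h\cdot u\ge1$. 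Because the prefactor $(1-t)$ is nonnegative on $[0,1]$ and the product vanishes identically at $t=1$, this yields $\phi(t)\ge1$ on $[0,1]$ if and only if $h\cdot u\ge1$, which is the assertion.

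I do not anticipate a genuine obstacle: the only thing to notice is the factorization that turns the quadratic $\phi(t)-1$ into $(1-t)$ times an affine function, after which the endpoint/monotonicity argument is immediate, and the irrelevance of $x$ is built in from the first line. The one point to state carefully is that $\phi(1)=1$ always, so requiring $\phi\ge1$ on $[0,1]$ is the same as requiring it on $[0,1)$, where one may legitimately divide by $(1-t)>0$; then, $\psi(t):=(1-t)(\|h\|^2-1)+2t(h\cdot u-1)$ being affine and continuous, nonnegativity on $[0,1)$ is equivalent to nonnegativity on $[0,1]$, closing the argument.
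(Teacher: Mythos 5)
Your proof is correct and follows essentially the same route as the paper's: both reduce membership of the whole segment to the quadratic inequality $\|(1-t)h+tu\|^2\ge1$ on $[0,1]$ (observing that $x$ drops out entirely) and then show this is equivalent to a condition at the endpoint sitting on $\partial F_0$. The only difference is in the finish --- the paper uses $f(0)=1$, $f'(0)=2(h\cdot u-1)$ and the (implicit) convexity of the quadratic, whereas you use the explicit factorization $\phi(t)-1=(1-t)\bigl[(1-t)(\|h\|^2-1)+2t(h\cdot u-1)\bigr]$, which is a clean and self-contained way to close the argument.
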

\begin{proof} We require that for $t\in[0,1]$
$$d(ta+(1-t)(x-u),ta'+(1-t)(x+u))\ge2.$$
Halving and squaring, this becomes
\begin{eqnarray*}1&\le&\|th+(1-t)u\|^2\\
&=&t^2\|h\|^2+2t(1-t)h\cdot u+(1-t)^2.\end{eqnarray*}
By assumption, $\|h\|^2\ge1$, so this quadratic function $f(t)$ satisfies $f(0)=1$ and $f(1)\ge1$. It is $\ge1$ for all $t\ge0$ iff $f'(0)\ge0$. Since $f'(0)=-2+2h\cdot u$, the result follows.
\end{proof}

 Because of Lemma \ref{anglelem}, paths of the form $\ell_1\sigma\ell_2$ in Theorem \ref{mainthm}(b) exist as long as $h\cdot u\ge1$ and $k\cdot v\ge1$ for unit vectors $u$ and $v$, for any $x$ and $y$. The proof of Theorem \ref{mainthm} will show that minimal length of such paths is achieved when $h\cdot u=1=k\cdot v$ and $\|u-v\|$ is minimized. The following result relates intersections of the hyperplanes $h\cdot u=1$ and $k\cdot u=1$ inside and on the unit sphere to the value of $\delta$ in Theorem \ref{mainthm}, which in turn determines the types of geodesics.

\begin{prop}\label{HKprop} Let $h$, $k$, and $\delta$ be as in Theorem \ref{mainthm}. Let $H=\|h\|^2\ge1$, $K=\|k\|^2\ge1$, and $D=h\cdot k$.
\begin{itemize}
\item If $\min(H,K)\le D$, then $\delta^2=4\min(H,K)\ge4$, so $\delta\ge2$.
\item If $\min(H,K)\ge D$ and $h\ne k$, then $$\delta^2=4(HK-D^2)/(H+K-2D).$$ In this case, regarding solutions of $h\cdot u=1$ and $k\cdot u=1$, we have
    \begin{itemize}
   \item [i.] There exist solutions with $\|u\|<1$ (and more than one solution with $\|u\|=1$) iff $\delta>2$.
 \item[ii.] There exists a unique solution with $\|u\|=1$ iff $\delta=2$.
 \item[iii.] There exist no solutions with $\|u\|\le1$ iff $\delta<2$.
 \end{itemize}
 \end{itemize}
 \end{prop}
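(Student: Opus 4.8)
The plan is to reduce everything to the one-variable minimization defining $\delta$. Write $\phi(t)=\|t k+(1-t)h\|^2 = Ht^2\cdot(\text{wait})$... let me set up coordinates so that the linear path from $P$ to $Q$ has difference-of-points $t\mapsto tk+(1-t)h$ (this is $(1-t)h+tk$, the half-difference along the linear path), so $\delta^2 = 4\min_{0\le t\le1}\psi(t)$ where $\psi(t)=\|(1-t)h+tk\|^2 = (1-t)^2 H + 2t(1-t)D + t^2 K$. This is a quadratic in $t$ with $\psi(0)=H$, $\psi(1)=K$, and leading coefficient $H-2D+K = \|h-k\|^2\ge0$. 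First I would dispose of the degenerate case $h=k$ (then $\psi\equiv H$, $\delta^2=4H$, consistent with the first bullet since then $H=D$); so assume $H+K-2D>0$. The unconstrained minimum of $\psi$ is at $t_*=(H-D)/(H+K-2D)$ with value $\psi(t_*)=(HK-D^2)/(H+K-2D)$ (a routine computation, using $HK-D^2\ge0$ by Cauchy–Schwarz).

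Next I would split on whether $t_*\in[0,1]$. Since the denominator is positive, $t_*\ge0 \iff H\ge D$ and $t_*\le1 \iff K\ge D$. If $\min(H,K)\le D$, then $t_*$ falls outside $[0,1]$ (or at an endpoint), so the constrained minimum of $\psi$ on $[0,1]$ is attained at whichever endpoint, giving $\delta^2=4\min(H,K)$; and since $H,K\ge1$ this is $\ge4$. This is the first bullet. If instead $\min(H,K)\ge D$, then $t_*\in[0,1]$, so $\delta^2=4\psi(t_*)=4(HK-D^2)/(H+K-2D)$, the formula in the second bullet.

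For parts (i)–(iii) I would analyze the affine system $h\cdot u=1$, $k\cdot u=1$ directly. When $h,k$ are linearly independent, the solution set is an affine $(n-2)$-plane $L$; the minimum-norm point $u_0$ on $L$ lies in $\operatorname{span}(h,k)$, and solving the $2\times2$ system $\begin{pmatrix}H & D\\ D & K\end{pmatrix}\begin{pmatrix}s\\ t\end{pmatrix}=\begin{pmatrix}1\\ 1\end{pmatrix}$ for $u_0=sh+tk$ gives $\|u_0\|^2 = s+t = (H+K-2D)/(HK-D^2)$, which is exactly $4/\delta^2$. Hence $\|u_0\|<1\iff \delta>2$, and in that case $L$ meets the open unit ball, so it meets the unit sphere in a sphere of positive dimension (more than one point) — this is (i); $\|u_0\|=1\iff\delta=2$, and then $L$ is tangent to the sphere at the unique point $u_0$ — this is (ii); $\|u_0\|>1\iff\delta<2$, and then $L$ misses the closed unit ball entirely — this is (iii). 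The linearly dependent case $h=\lambda k$ with $\lambda\ne1$ forces the system to be inconsistent unless $\lambda=1$, but then $D=\lambda K$ and $\min(H,K)\ge D$ combined with $H,K\ge1$ forces a sign/size contradiction with $\delta^2=4(HK-D^2)/(H+K-2D)$; I would check this boundary case explicitly (it corresponds to $\delta<2$ and no solutions, consistent with (iii)).

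The main obstacle I anticipate is \emph{bookkeeping the identification of the parametrization}: matching the variable $t$ in the definition of $\delta$ (which runs over $tb+(1-t)a$ and $tb'+(1-t)a'$) with the half-difference $(1-t)h+tk$, and confirming the factor of $4$ is handled consistently throughout, so that the single identity $\|u_0\|^2=4/\delta^2$ does all the work in (i)–(iii). The algebra itself — completing the square in $\psi$, inverting the $2\times2$ Gram matrix, Cauchy–Schwarz for $HK-D^2\ge0$ — is routine; the care is in the case division ($t_*$ inside versus outside $[0,1]$, and $h=k$ versus $h\ne k$) and in noting that "$L$ meets the open ball" upgrades automatically to "$L\cap S^{n-1}$ has more than one point" since $n\ge2$ makes $L$ positive-dimensional whenever it is nonempty and $h\ne k$.
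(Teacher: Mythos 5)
Your proposal is correct and follows essentially the same route as the paper: minimize the quadratic $\psi(t)$ on $[0,1]$ with the case split on whether $t_*=(H-D)/(H+K-2D)$ lies in $[0,1]$, and then establish the key identity $\|u_0\|^2=(H+K-2D)/(HK-D^2)=4/\delta^2$ for the minimum-norm solution of the affine system, with the orthogonal component $\ell$ accounting for the rest. The only cosmetic difference is that you locate $u_0$ by inverting the Gram matrix and use $\|u_0\|^2=s+t$, whereas the paper writes $u=\alpha h+\beta k+\ell$ and computes $u\cdot u$ directly; these are the same calculation.
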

 \begin{proof}  Note that  $HK\ge D^2$ by Cauchy-Schwarz, and $H+K>2D$, since $\|h\|^2+\|k\|^2>2\|h\|\,\|k\|\cos\a$ when $h\ne k$.

Let $d^2(t)=\|2tk+2(1-t)h\|^2=4(t^2K+(1-t)^2H+2t(1-t)D)$. Then $\delta^2=\min(d^2(t):\ t\in [0,1])$.
The minimum of $d^2(t)$ over all $t\in\R$ occurs when $t=t_0:=\frac{H-D}{H+K-2D}$, and has value $4\frac{HK-D^2}{H+K-2D}$.
Note that $t_0\in[0,1]$ iff $\min(H,K)\ge D$. If $\min(H,K)\le D$, then $d^2(t)$ does not have a relative minimum for $0<t<1$ so its absolute minimum on $[0,1]$ occurs at an endpoint.

If $k$ is a scalar multiple of $h$, the result is easily verified, so we assume this is not the case, and have $HK-D^2>0$.
Now let $u=\a h+\b k+\ell$ with $\ell$ orthogonal to $h$ and $k$. If $n=2$, omit $\ell$.
The equations $h\cdot u=1$ and $k\cdot u=1$ become $\a H+\b D=1$ and $\a D+\b K=1$, whose solution is
$$\a=\frac{K-D}{HK-D^2},\qquad \b=\frac{H-D}{HK-D^2},$$
yielding
\begin{equation}\label{uu}u\cdot u=\frac{H+K-2D}{HK-D^2}+\ell\cdot\ell.\end{equation}

Thus when $k$ is not a scalar multiple of $h$, we have $\|u\|^2=\frac4{\delta^2}+\|\ell\|^2$. The conclusions follow. By the complementary nature of the cases, it suffices to show implication in one direction. For each hypothesis on $\delta$, the conclusion about $\|u\|$ for solutions is clear. In case (i), the solutions are obtained by varying $\ell$.
\end{proof}

By Proposition \ref{HKprop}, the next result applies exactly when $\delta\le2$.
 \begin{prop}\label{minl} Let $h$ and $k$ satisfy $\|h\|>1$ and $\|k\|>1$. Assume there does not exist $u$ with $h\cdot u=1=k\cdot u$ with $\|u\|<1$. The solutions of $h\cdot u=1=k\cdot v$ and $\|u\|=1=\|v\|$ with minimal $\|u-v\|$ are
 \begin{itemize}\item[i.] If $k$ is a scalar multiple of $h$, then
 $$u=\frac h{\|h\|^2}+\frac{S_0}{\|h\|}w,\quad v=\frac k{\|k\|^2}+\frac{S_1}{\|k\|}w,$$
 where $w$ ranges over the set of vectors satisfying $h\cdot w=0$ and $w\cdot w=1$. Here $S_0$ and $S_1$ are as in Theorem \ref{mainthm}.
 \item[ii.] If $k$ is not a scalar multiple of $h$, then there is a unique solution, using notation of Proposition \ref{HKprop},
 \begin{eqnarray*}u&=&\frac hH+\frac{S_0(k-\frac DHh)}{\sqrt{HK-D^2}}\\
 v&=&\frac kK+\frac{S_1(h-\frac DKk)}{\sqrt{HK-D^2}}.\end{eqnarray*}
     \end{itemize}
     \end{prop}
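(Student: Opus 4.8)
The plan is to reduce the constrained optimization to a concrete trigonometric minimization and then solve it by Lagrange multipliers. Since $u$ and $v$ are independent except that each must lie on its own affine hyperplane and on the unit sphere, the constraints decouple: $u$ ranges over the circle (or lower-dimensional sphere) $\{u:\ h\cdot u=1,\ \|u\|=1\}$ and $v$ over $\{v:\ k\cdot v=1,\ \|v\|=1\}$. Minimizing $\|u-v\|^2=2-2u\cdot v$ is the same as maximizing $u\cdot v$ over this product. The hypothesis that no solution with $\|u\|<1$ exists guarantees (via Proposition \ref{HKprop}, or directly via \eqref{uu}) that these loci are nonempty, so the minimum is attained.

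In case (i), where $k=\lambda h$ for a scalar $\lambda$, the two hyperplanes $h\cdot u=1$ and $k\cdot v=1$ are parallel. Writing $u=\frac{h}{\|h\|^2}+w'$ with $w'\perp h$, the unit-length condition forces $\|w'\|^2=1-1/\|h\|^2=S_0^2/\|h\|^2$, so $w'=(S_0/\|h\|)w$ with $\|w\|=1$, $w\perp h$; similarly for $v$. Then $u\cdot v=\frac{h\cdot k}{\|h\|^2\|k\|^2}+\frac{S_0S_1}{\|h\|\,\|k\|}w_u\cdot w_v$, and since $h\cdot k$ and $S_0,S_1,\|h\|,\|k\|$ are fixed, $u\cdot v$ is maximized exactly when $w_u\cdot w_v$ is maximal, i.e. $w_u=w_v=w$. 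This gives the stated formula, with $w$ free because any common unit vector orthogonal to $h$ works; the residual freedom in $w$ is real (the problem genuinely has a sphere of solutions, consistent with Theorem \ref{mainthm}(c)).

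In case (ii), where $k$ is not a scalar multiple of $h$, I would set up the Lagrangian for maximizing $u\cdot v$ subject to $h\cdot u=1$, $\|u\|^2=1$, $k\cdot v=1$, $\|v\|^2=1$. Stationarity gives $v=\mu_1 h+\mu_2 u$ and $u=\nu_1 k+\nu_2 v$ for scalars; together with the four constraints this is a finite algebraic system. The cleanest route is to guess the answer from the structure: $u$ should lie in the span of $h$ and $k$ (its component orthogonal to $\{h,k\}$ only costs norm without helping $u\cdot v$, by the same orthogonality argument as in case (i)), so write $u=\alpha h+\beta k$ with $\alpha H+\beta D=1$; the norm condition $\|u\|^2=1$ then, via \eqref{uu}, pins down the component of $u$ along the $h$-direction versus the $(k-\frac DH h)$-direction, giving $u=\frac hH+\frac{S_0}{\sqrt{HK-D^2}}\bigl(k-\tfrac DH h\bigr)$ up to the sign of the second term, and symmetrically for $v$. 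Finally I would verify that the sign choices displayed maximize $u\cdot v$ (the other sign would flip the cross term and decrease it), and that uniqueness holds because in the $\{h,k\}$-plane each hyperplane meets the unit circle in at most two points and only this pairing is optimal.

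The main obstacle is case (ii): untangling which of the two sign choices on each of $u$ and $v$ is correct, and confirming this is a genuine maximum rather than a saddle. This requires computing $u\cdot v$ for each sign combination and comparing; the hypothesis $\|u\|>1$ on $h$ and $k$ (so $S_0,S_1>0$ are real) and $HK>D^2$ are exactly what make the formulas well-defined, and the geometric picture — two circles in the plane spanned by $h,k$, each the intersection of a line with the unit circle — makes it plausible that the "inward" choice (the one written) is optimal, but this must be checked by a short computation rather than asserted.
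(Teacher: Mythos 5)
Your case (i) is correct and follows the same route as the paper: decompose $u$ into its component along $h$ and a unit vector $w$ orthogonal to $h$, solve for the coefficients, and observe that $\|u-v\|$ is minimized exactly when $u$ and $v$ use the same $w$. The problem is in case (ii), at the step where you reduce to the plane spanned by $h$ and $k$. The justification you give --- that the component of $u$ orthogonal to $\{h,k\}$ ``only costs norm without helping $u\cdot v$, by the same orthogonality argument as in case (i)'' --- is not a valid argument. Writing $u=u_0+m_u$ and $v=v_0+m_v$ with $m_u,m_v$ orthogonal to the span of $h$ and $k$, one has $u\cdot v=u_0\cdot v_0+m_u\cdot m_v$, and \emph{aligned} orthogonal components contribute positively to $u\cdot v$. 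Indeed, case (i) is precisely a situation where the optimal $u$ and $v$ carry large, aligned components orthogonal to the span of $h$ and $k$ (there a line), so ``the same argument as in case (i)'' points in the opposite direction from what you need. The in-plane reduction is true in case (ii), but it must be proved, not asserted.

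There are two ways to close the gap. One is to actually carry through the Lagrange conditions you wrote down: from $v=\mu_1h+\mu_2u$ and $u=\nu_1k+\nu_2v$ one gets $(1-\mu_2\nu_2)v=\mu_1h+\mu_2\nu_1k$, so either $v$ (and then $u$) lies in the span of $h$ and $k$, or $\mu_2\nu_2=1$, which forces $v=\pm u$ and is excluded at the minimum under the standing hypotheses. The other is the paper's quantitative route: write $v=ah+b\ell+m$ with $\ell=k-\frac DHh$ and $m$ orthogonal to $h$ and $\ell$, note that the optimal $v$ is the one with the largest $h$-component $a$, eliminate $b$ using $k\cdot v=1$, and solve the resulting quadratic for $a$ as a function of $M=\|m\|^2$; the maximum occurs at $M=0$ with the $+$ root. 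That single computation settles simultaneously the in-plane reduction and the sign ambiguity that you defer to ``a short computation.'' As written, your proposal establishes neither, so case (ii) is incomplete.
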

\begin{proof} (i.) The vectors $u$ and $v$ lie on the intersections with the unit sphere of parallel hyperplanes.
The vector $u$ can be written uniquely as $u=c_0h+c_1w$ satisfying $h\cdot u=1$, $u\cdot u=1$, $w\cdot w=1$, and $h\cdot w=0$. These equations yield the above formula for $u$, and similarly for $v$. The vector $v$ closest to $u$ will be the one with the same unit vector $w$, and for all vectors $w$, the values of $\|u-v\|$ are the same.

(ii.) Let $\ell=k-\frac DHh$, which is orthogonal to $h$. Then $v$ can be written uniquely as $ah+b\ell+m$, with $m$ orthogonal to $h$ and $\ell$. The point $v$ closest to the hyperplane $h\cdot u=1$ will be the one with the largest $a$. From $v\cdot k=1$, we deduce $1=aD+b(HK-D^2)/H$. Use this to eliminate $b$. Then $v\cdot v=1$ implies
$$1=a^2H+\biggl(\frac{H(1-aD)}{HK-D^2}\biggr)^2\biggl(\frac{HK-D^2}H\biggr)+M,$$
where $M=\|m\|^2$. This simplifies to
$$0=HK\,a^2-2Da+1-(1-M)(HK-D^2)/H,$$
which has solution
$$a=\frac{D\pm\sqrt{D^2+K((1-M)(HK-D^2)-H)}}{HK}.$$
The maximum of this occurs when $M=0$ and has $a=(D+\sqrt{(K-1)(HK-D^2)})/HK$. One easily finds $b$ now and obtains the formula for $v$. The formula for $u$ is obtained similarly.
\end{proof}

\begin{cor}\label{cor1} The angle $\b$ in Theorem \ref{mainthm}(b) satisfies
$$\cos(\b)=\frac{(S_0+S_1)\sqrt{HK-D^2}+(1-S_0S_1)D}{HK}.$$
The angle $\b$ in Theorem \ref{mainthm}(c) satisfies
$$\cos(\b)=\frac D{HK}+\frac{S_0S_1}{\sqrt{HK}}.$$
\end{cor}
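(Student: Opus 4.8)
The plan is to compute $\cos\b$ as the inner product $u\cdot v$: by construction $\|u\|=\|v\|=1$, so the angle $\b\in[0,\pi)$ between them is determined by $u\cdot v$. In both cases one substitutes the explicit formulas for $u$ and $v$ from Proposition \ref{minl} and expands, using only the bilinearity of the dot product together with $h\cdot h=H$, $k\cdot k=K$, and $h\cdot k=D$ (notation of Proposition \ref{HKprop}); no geometry beyond this is needed.

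Case (c) is essentially immediate. There $h$ and $k$ are antiparallel (Theorem \ref{mainthm}(c)), so the vectors $w$ occurring in Proposition \ref{minl}(i) satisfy $h\cdot w=0$ and, since $k$ is a negative scalar multiple of $h$, also $k\cdot w=0$. Writing $u=\frac h{\|h\|^2}+\frac{S_0}{\|h\|}w$ and $v=\frac k{\|k\|^2}+\frac{S_1}{\|k\|}w$ and forming $u\cdot v$, the two mixed terms $\frac{h\cdot w}{\cdots}$ and $\frac{k\cdot w}{\cdots}$ drop out, leaving only $\frac{h\cdot k}{\|h\|^2\|k\|^2}+\frac{S_0S_1}{\|h\|\,\|k\|}(w\cdot w)=\frac D{HK}+\frac{S_0S_1}{\sqrt{HK}}$, which is the stated formula. (This also shows the value is independent of the choice of $w$, consistent with the fact that all these pairs $u,v$ give the same $\b$.)

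Case (b) is the substantive computation. Put $E=\sqrt{HK-D^2}$, which is nonzero because $k$ is not a scalar multiple of $h$. Using Proposition \ref{minl}(ii), rewrite $u$ and $v$ as linear combinations of $h$ and $k$, namely $u=\frac{E-S_0D}{EH}\,h+\frac{S_0}{E}\,k$ and $v=\frac{S_1}{E}\,h+\frac{E-S_1D}{EK}\,k$. Then $u\cdot v$ is the sum of the four products of coefficients, weighted by $H$, $D$, $D$, $K$; pulling out the common factor $1/E^2$ leaves $(S_0+S_1)E-S_0S_1D+\frac{D}{HK}(E-S_0D)(E-S_1D)$. Expanding $(E-S_0D)(E-S_1D)=E^2-(S_0+S_1)DE+S_0S_1D^2$ and replacing every occurrence of $E^2$ by $HK-D^2$, one separates the terms containing a factor $E$ from those that do not; using $1-\frac{D^2}{HK}=\frac{E^2}{HK}$, both groups turn out to carry a common factor $\frac{E^2}{HK}$, so the bracket collapses to $\frac{E^2}{HK}\bigl[(S_0+S_1)E+(1-S_0S_1)D\bigr]$. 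Dividing by the $E^2$ we set aside cancels it against this factor and yields $\cos\b=\bigl[(S_0+S_1)\sqrt{HK-D^2}+(1-S_0S_1)D\bigr]/HK$.

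The only real obstacle is bookkeeping in case (b); the point that makes the answer come out clean — and the one step worth stating explicitly rather than leaving to the reader — is that after the substitution $E^2=HK-D^2$ the surviving expression factors through $\frac{E^2}{HK}$, so the denominator reduces from $E^2\cdot HK$ to just $HK$.
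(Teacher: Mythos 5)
Your proposal is correct and is exactly the paper's approach: the paper's entire proof of Corollary \ref{cor1} is the one-line remark ``We compute $\cos(\b)=u\cdot v$ from Proposition \ref{minl},'' and you have simply carried out that dot-product computation in full (I checked the algebra in case (b), including the factorization through $E^2/HK$, and it is right). No further comment is needed.
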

\begin{proof} We compute $\cos(\b)=u\cdot v$ from Proposition \ref{minl}.\end{proof}
\begin{proof}[Proof of Theorem \ref{mainthm}]
The conclusion of part (a) is immediate. Next we reduce consideration of part (c) to that of part (b).

First note that $\delta=0$ iff $tb+(1-t)a=tb'+(1-t)a'$ for some $t\in[0,1]$ iff $b'-b$ and $a'-a$ are negative multiples of one another. Then the analysis of type-(b) paths which follows applies, with minor modifications which are discussed below, to all of the pairs $u,v$ obtained in Proposition \ref{minl}(i), listed again in Theorem \ref{mainthm}(c).

Geodesics in a manifold with boundary are path compositions of geodesics in the manifold and geodesics in the boundary.(e.g., \cite{Hu}.) In our case, this will consist of at most one geodesic in $\partial F_0$. [\![If it were $\ell_1\sigma_1\ell_2\sigma_2\ell_3$, then $\ell_2$ would be a line segment connecting two points of $\partial F_0$. Similarly to the proof of Lemma \ref{anglelem}, the line segment connecting two points $(x-u,x+u)$ and $(y-v,y+v)$ of $\partial F_0$ will lie outside $F_0(\R^n,2)$ unless the two points have $u=v$, in which case it is a line segment lying in $\partial F_0$. When path-multiplied by an angle-changing geodesic in $\partial F_0$, the result will not be a geodesic.]\!]
So we need just consider path compositions of the form $\ell_1\sigma\ell_2$.

If $\|h\|=1$, then $P\in\partial F_0$. By the argument just described, we may then choose $\ell_1$ to be the constant path. This is consistent with (\ref{xy}) since we would have $S_0=0$, $x=A$, and $u=h$. Similarly, if $\|k\|=1$, the path $\ell_2$ may be ignored. Thus we shall assume $\|h\|>1$ and $\|k\|>1$, so Proposition \ref{minl} applies.

With the notation of the theorem, let $\Dhat_1$ denote the length of the linear path $\ell_1$ in $F_0(\R^n,2)$ from $P$ to any point $(x-u,x+u)$ with $\|u\|=1$. This equals
\begin{eqnarray}&&\sqrt{\|x-u-a\|^2+\|x+u-a'\|^2}\nonumber\\
&=&\nonumber\sqrt{\|a\|^2+\|a'\|^2+2\|x\|^2+2-4x\cdot A-4h\cdot u}\\
&=&\sqrt2\sqrt{\|h\|^2+1+\|x-A\|^2-2h\cdot u}.\label{D1}\end{eqnarray}

A path  $\ell_1\sigma\ell_2$ has length $\Dhat_1+\Dhat_3+\Dhat_2$, where $\Dhat_3$ is the length of the curved path $\sigma$ described in Proposition \ref{bdrylem}, and $\Dhat_2$ is a formula similar to (\ref{D1}) for a linear path $\ell_2$ from $(y-v,y+v)$ to $Q$. Let $D_i=\Dhat_i/\sqrt2$ and $T=D_1+D_3+D_2$. If $h\cdot u=1=k\cdot v$ and $\a$ is the angle between $u$ and $v$, the formulas for $D_1$ and $D_2$ simplify nicely, and we have
\begin{equation}\label{T3}T=\sqrt{S_0^2+\|x-A\|^2}+\sqrt{\|x-y\|^2+\a^2}+\sqrt{S_1^2+\|y-B\|^2}.\end{equation}

Setting $\partial T/\partial x_i=0$ gives
\begin{equation}\label{px}\frac{x_i-A_i}{\sqrt{S_0^2+\|x-A\|^2}}+\frac{x_i-y_i}{\sqrt{\|x-y\|^2+\a^2}}=0,\end{equation}
so
$$(x_i-A_i)^2(\|x-y\|^2+\a^2)=(x_i-y_i)^2(S_0^2+\|x-A\|^2).$$
Summing over $i$ and cancelling yields $\a^2\|x-A\|^2=S_0^2\|x-y\|^2$. Now (\ref{px}) says $\a(x-A)=(y-x)S_0$. Similarly $\a(y-B)=(x-y)S_1$. Solving these equations yields (\ref{xy}), with $\b$ replaced by $\a$. This is a consequence of $\partial T/\partial x_i=0=\partial T/\partial y_i$ and (\ref{=1}).

When $x$ and $y$ are as in (\ref{xy}),
\begin{equation}\label{three}x-y=\tfrac{(A-B)\a}{\a+S_0+S_1},\quad x-A=\tfrac{(B-A)S_0}{\a+S_0+S_1},\quad y-B=\tfrac{(A-B)S_1}{\a+S_0+S_1},\end{equation}
and we obtain the dramatic simplification
$$T=\sqrt{\|A-B\|^2+(\a+S_0+S_1)^2},$$
showing clearly that we should choose $\b$ to minimize $\a$.

Note that $\b<\pi$ (so Lemma \ref{bdrylem} applies), since the only way to have $\b=\pi$ would be with the hyperplanes $h\cdot u=1$ and $k\cdot v=1$ tangent to the unit sphere, and parallel, so $\|h\|=1=\|k\|$, which we have removed from our consideration.

We must also consider changes of $T$ caused by changes in $u$ or $v$. The more general formula for $T$ at any point is
\begin{eqnarray}\nonumber T&=&\sqrt{H+1+\|x-A\|^2-2h\cdot u}+\sqrt{\|x-y\|^2+(\arccos(u\cdot v))^2}\\
&&+\sqrt{K+1+\|y-B\|^2-2k\cdot v}.\label{Te}\end{eqnarray}
At our claimed critical point, which was derived from $\frac{\partial T}{\partial x_i}=0=\frac{\partial T}{\partial y_i}$, (\ref{=1}), and minimal $\|u-v\|$, $u$ and $v$ lie in the $h$-$k$ plane by Proposition \ref{minl}(ii). Changes in $u$ or $v$ orthogonal to the $h$-$k$ plane will not affect (\ref{Te}) at this point. For Situation (c), each pair $u,v$ is determined by a choice of $w$. They lie in the $h$-$w$ plane. The analysis here applies with $h$-$k$ replaced by $h$-$w$.

 Letting $\a$ denote the angle from $v$ to $u$, and using $\sqrt{\|x-y\|^2+\a^2}$ for the  middle term of (\ref{Te}), we obtain
\begin{eqnarray}\nonumber\frac{\partial T}{\partial\a}&=&\frac{-h\cdot\frac{du}{d\a}}{\sqrt{S_0^2+\|x-A\|^2}}+\frac\a{\sqrt{\|x-y\|^2+\a^2}}\\
\label{num}&=&\frac{\a+S_0+S_1}{\sqrt{(\a+S_0+S_1)^2+\|B-A\|^2}}\biggl(\frac{-h\cdot\tfrac{du}{d\a}}{S_0}+1
\biggr),\end{eqnarray}
incorporating (\ref{three}).
We can parametrize $\R^n$ so that $v=(1,\overline0)$, $u=(\cos\a,\sin\a,\overline0)$,
and $h=(h_1,h_2,\overline0)$. We obtain $h\cdot\frac{du}{d\a}=-h_1\sin\a+h_2\cos\a$, so
\begin{eqnarray*}S_0^2-(h\cdot\tfrac{du}{d\a})^2&=&h_1^2+h_2^2-1-(h_1^2\sin^2\a+h_2^2\cos^2\a-2h_1h_2\cos\a\sin\a)\\
&=&(h_1\cos\a+h_2\sin\a)^2-1\\
&=&0.\end{eqnarray*}
Noting that $h\cdot\frac{du}{d\a}\ge0$ since $h\cdot u$ had minimal allowable value (1) at our point, we obtain $h\cdot\frac{du}{d\a}=S_0$, and so $\frac{\partial T}{\partial \a}=0$ in (\ref{num}).

We have shown that the assumption (\ref{=1}) leads to the unique critical point of $T$ described in Theorem \ref{mainthm}(b) when $h$ and $k$ are not parallel, and to any of the claimed points in Situation (c), and Lemma \ref{anglelem} says that we must have $h\cdot u\ge1$ and $k\cdot v\ge1$. If $h\cdot u=t>1$,  corresponding to a larger value of $\a$, then we can find values of $x$ and $y$ that make $\partial T/\partial x_i=0=\partial T/\partial y_i$ with formulas similar to (\ref{xy}) except that now $S_0=\sqrt{\|h\|^2+1-2t}$. An analysis similar to the above paragraph will lead to $\partial T/\partial\a>0$ for changes in the $h$-$k$ plane. So points with $h\cdot u>1$ or $k\cdot v>1$ cannot be critical points. We conclude that our critical point is a unique minimum of $T$ in Situation (b), and our claimed points are the only critical points in Situation (c).

Next we justify the next-to-last sentence of part (b) by noting that if $\b=0$, so $u=v$ and then clearly $x=y$ in (\ref{xy}), and then showing that
$$(x-u,x+u)=\tfrac{S_1}{S_0+S_1}P+\tfrac{S_0}{S_0+S_1}Q,$$
so the unique point where the lines from $P$ and $Q$ meet $\partial F_0$ is on the line connecting $P$ and $Q$.

This requires showing that
$$\tfrac{S_0B+S_1A}{S_0+S_1}\pm u=\tfrac{S_1}{S_0+S_1}(A\pm h)+\tfrac{S_0}{S_0+S_1}(B\pm k),$$
hence we need to prove
\begin{equation}\label{213}u=\frac{S_1h+S_0k}{S_0+S_1}.\end{equation}
By Proposition \ref{minl}(ii), $u$ is in the $h$-$k$ plane. We parametrize that plane so that $h=(h_1,h_2)$, $k=(k_1,k_2)$, and $u=(\cos\theta,\sin\theta)$. Since $h\cdot u=1$,
$h_2=(1-h_1\cos\theta)/\sin\theta$, so
$$S_0=\sqrt{h_1^2+\bigl(\tfrac{1-h_1\cos\theta}{\sin\theta}\bigr)^2-1}=\frac{\sqrt{\cos^2\theta-2h_1\cos\theta+h_1^2}}{|\sin\theta|}=\bigg|\frac{\cos\theta-h_1}{\sin\theta}\bigg|,$$
and similarly $S_1=|(\cos\theta-k_1)/\sin\theta|$.

Since $\theta$ is the common endpoint of (otherwise disjoint) intervals on which $h_1\cos\theta+h_2\sin\theta\ge1$ and $k_1\cos\theta+k_2\sin\theta\ge1$, the derivatives of these expressions must have opposite signs at $\theta$. The derivative of $h_1\cos\theta+h_2\sin\theta$ is
$$-h_1\sin\theta+\tfrac{1-h_1\cos\theta}{\sin\theta}\cos\theta=\tfrac{\cos\theta-h_1}{\sin\theta}.$$
Thus $\cos\theta-h_1$ and $\cos\theta-k_1$ have opposite signs. Thus
$$\frac{S_1h_1+S_0k_1}{S_0+S_1}=\frac{(\cos\theta-k_1)h_1-(\cos\theta-h_1)k_1}{-(\cos\theta-h_1)+(\cos\theta-k_1)}=\cos\theta.$$
Similarly, $(S_1h_2+S_0k_2)/(S_0+S_1)=\sin\theta$, proving (\ref{213}) and hence the next-to-last sentence of part (b) of Theorem \ref{mainthm}.

Finally, regarding the last sentence of part (b):
 If $\delta=2$, there are points $c=(1-t)a+tb$ and $c'=(1-t)a'+tb'$ such that $d(c,c')=2$. There is a unique unit ball having $cc'$ as a diameter, and the method of (b) will yield $\ell_1$ the linear path from $(a,a')$ to $(c,c')$, $\sigma$ a constant path, and $\ell_2$ the linear path from $(c,c')$ to $(b,b')$.
\end{proof}

\begin{proof}[Proof of Theorem \ref{GCthm}] Let $E_0$ denote the set of all $(P,Q)\in F_0(\R^n,2)\times F_0(\R^n,2)$ of type (c) in Theorem \ref{mainthm}, and $E_1$ it complement.
First we show that the unique geodesics at points $(P,Q)$ of $E_1$  vary continuously with $(P,Q)$, giving a geodesic motion planning rule on $E_1$.

For the type-(b) geodesics, the issue is whether $u$ and $v$, hence $\b$, vary continuously with $(P,Q)$. Small changes in $(P,Q)$ cause small changes in $h$ and $k$, and hence small changes in $u$ and $v$ of norm 1 satisfying $h\cdot u=1=k\cdot v$. If $(u,v)$ has minimal positive $\|u-v\|$ for such vectors, there will be a neighborhood of $(u,v)$ on which this is true. Alternatively, $u$ and $v$ vary continuously with $h$ and $k$ by Corollary \ref{cor1}.

The linear paths of type (a) vary continuously with the parameters. By the last two sentences of Theorem \ref{mainthm}(b), the paths in the intersection of types (a) and (b) agree, and so by the Pasting Lemma, we have a continuous choice of geodesics on $E_1$.

If $n$ is even, let $V$ be a unit-length vector field on $S^{n-1}$. In Theorem \ref{mainthm}(c), let $w=V(\frac h{\|h\|})$. This leads to a continuous choice of geodesics on $E_0$. Hence $\GC(F_0(\R^n,2))\le 1$ if $n$ is even. Since \begin{equation}F_0(\R^n,2)\approx \R^n\times\{x\in\R^n:\|x|\ge2\}\simeq S^{n-1}\label{he}\end{equation} and $\TC$ is a homotopy invariant, we obtain, for $n$ even,
$$\GC(F_0(\R^n,2))\ge\TC(F_0(\R^n,2))=\TC(S^{n-1})=1\ge\GC(F_0(\R^n,2)).$$
Hence we have equality.

If $n$ is odd, let $V$ be a unit-length vector field on $S^{n-1}-\{(1,\overline0)\}$. Then $w=V(\frac h{\|h\|})$ in \ref{mainthm}(c) leads to a continuous choice of geodesics on $E_0-Z$, where $Z$ is the set of $((a,a'),(b,b'))\in E_0$ such that $a'-a$ and $b'-b$ are scalar multiples of $(1,\overline0)$. On $Z$, you could use the geodesics obtained from using $w=(0,1,\overline 0)$ in \ref{mainthm}(c). Thus $\GC(F_0(\R^n,2))\le 2$, and since $\TC(S^{n-1})=2$, we have equality as in the previous paragraph.

\end{proof}
\section{Examples when $n=2$}\label{2sec}
We illustrate two examples of geodesics when $n=2$.

 Let $P=(P_1,P_2)=((-6,4),(6,8))$ and $Q=(Q_1,Q_2)=((8,-6),(2,-10))$. We have $h=(6,2)$ and $k=(-3,-2)$.
From (\ref{xy}), we obtain $x=(3.1596,-2.8468)$ and $y=(3.2474,-3.0927)$. From Proposition \ref{minl}, we obtain $u=(.4622,-.8867)$ and $v=(.3022,-.9533)$, and from Corollary \ref{cor1}, $\b=.1736$.  In this example, our path has length 25.2455, whereas the straight line path from $P$ to $Q$ (which is not in $F_0(\R^2,2)$) has length 25.2190.
Although we cannot quite draw the short middle part of the paths, in Figure \ref{fig3} we picture the paths in this example.

\bigskip
\begin{minipage}{6in}
\begin{fig}\label{fig3}

{\bf Example of geodesic.}

\begin{center}

\begin{\tz}[scale=.35]
\draw (3.16,-2.847) circle [radius=1];
\draw (3.247,-3.093) circle [radius=1];
\draw (-6,0) -- (8,0);
\draw (0,-10) -- (0,8);
\draw (6,8) -- (3.622,-3.733);
\draw [fill=red] [thick] (-6,4) -- (2.7,-1.96);
\draw (2,-10) -- (3.55,-4.046);
\draw [fill=red] [thick] (8,-6) -- (2.945,-2.14);
\node at (6,8) {$\ssize\bullet$};
\node at (-6,4) {$\ssize\bullet$};
\node at (2,-10) {$\ssize\bullet$};
\node at (8,-6) {$\ssize\bullet$};
\node at (7,8) {$P_2$};
\node at (3,-10) {$Q_2$};
\node at (-5,4) {$P_1$};
\node at (7,-6) {$Q_1$};
\end{\tz}
\end{center}
\end{fig}
\end{minipage}

Now we change the $8$ in $P_2$ of the above example to $12$, so that $\vec{P_1P_2}$ and $\vec{Q_1Q_2}$ are parallel in opposite directions; i.e., $h=(6,4)$ and $k=(-3,-2)$.

The two equal paths are depicted in Figure \ref{fig4}. We have $w=\pm(2,-3)/\sqrt{13}$ in part (c) of Theorem \ref{mainthm}. Also, $x=(3.2385,-2.3633)$ and $y=(3.4291,-2.9730)$ in (\ref{xy}) and $\b=.4202$ in Corollary \ref{cor1}. The length of each of these paths in $F_0(\R^n,2)$ is 28.375, compared with 28.213 for the straight-line path from $P$ to $Q$, which is not in $F_0(\R^n,2)$.

\bigskip
\begin{minipage}{6in}
\begin{fig}\label{fig4}

{\bf Example of two geodesics.}

\begin{center}

\begin{\tz}[scale=.35]
\draw (3.2385,-2.3633) circle [radius=1];
\draw (3.4291,-2.9730) circle [radius=1];
\draw (-6,0) -- (8,0);
\draw (0,-10) -- (0,8);
\draw [fill=red] [thick] (6,12) -- (3.9033,-3.1104);
\draw (6,12) -- (2.8046,-1.4624);
\draw [fill=red] [thick] (-6,4) -- (2.5738,-1.6162);
\draw (-6,4) -- (3.6725,-3.2643);
\draw [fill=red] [thick] (2,-10) -- (3.7312,-3.9263);
\draw (2,-10) -- (2.6654,-2.3275);
\draw [fill=red] [thick] (8,-6) -- (3.1269,-2.0198);
\draw (8,-6) -- (4.1928,-3.6186);
\node at (6,12) {$\ssize\bullet$};
\node at (-6,4) {$\ssize\bullet$};
\node at (2,-10) {$\ssize\bullet$};
\node at (8,-6) {$\ssize\bullet$};
\node at (7,12) {$P_2$};
\node at (3,-10) {$Q_2$};
\node at (-5,4) {$P_1$};
\node at (7,-6) {$Q_1$};
\end{\tz}
\end{center}
\end{fig}
\end{minipage}

\section{Unordered configuration space}\label{unorsec}
As we stated in Theorem \ref{Cthm}, the unordered configuration space $C(\R^n,2)$ has a natural, Euclidean-related metric, and it is geodesically complete.

\begin{proof}[Proof of Theorem \ref{Cthm}] We show that $d_U$ satisfies the triangle inequality. Without loss of generality, assume
$$d((a,a'),(b,b'))\le d((a,a'),(b',b))\quad\text{and}\quad d((b,b'),(c,c'))\le d((b,b'),(c',c)).$$
Then
\begin{eqnarray*}d_U(\{a,a'\},\{c,c'\})&\le&d((a,a'),(c,c'))\\
&\le&d((a,a'),(b,b'))+d((b,b'),(c,c'))\\
&=&d(\{a,a'\},\{b,b'\})+d(\{b,b'\},\{c,c'\}).\end{eqnarray*}
The quotient topology on $C(\R^n,2)$ comes from $\R^n\times(\R^n-\{0\})/\sim$ under $\{a,a'\}\mapsto\bigl(\frac{a+a'}2,\bigl[\frac{a-a'}2\bigr]\bigr)$. Our metric $d_U$ corresponds to the metric $d([x],[x'])=\min(d(x,x'),d(x,-x'))$ on $(\R^n-\{0\})/\sim$, which gives the quotient topology.

If $d_U(\{a,a'\},\{b,b'\})=d((a,a'),(b,b'))$, then the linear path $(1-t)(a,a')+t(b,b')$ lies in $F(\R^n,2)$, so its equivalence class is in $C(\R^n,2)$. [\![As observed at the beginning of the proof of Theorem \ref{mainthm}, the only thing that would prevent the path from being in $F(\R^n,2)$ is if $b'-b$ is a negative multiple of $a'-a$. If this is the case, then
\begin{eqnarray}&&d((a,a'),(b,b'))^2-d((a,a'),(b',b))^2\nonumber\\
&=&\|a-b\|^2+\|a'-b'\|^2-\|a-b'\|^2-\|a'-b\|^2\nonumber\\
&=&-2a\cdot b-2a'\cdot b'+2a\cdot b'+2a'\cdot b\nonumber\\
&=&2(a'-a)\cdot(b-b')\label{bb}\\
&>&0,\nonumber\end{eqnarray} contradicting the assumption that $d_U(\{a,a'\},\{b,b'\})=d((a,a'),(b,b'))$.]\!]
\end{proof}

\begin{rmk} {\rm The analogue of Theorem \ref{Cthm} is valid for $C(\R^n,k)$ for any $n$ and $k$.}\end{rmk}

\begin{prop} For $a$, $a'$, $b$, and $b'$ in $\R^n$, $d((a,a'),(b,b'))=d((a,a'),(b',b))$ iff $(a'-a)\cdot (b'-b)=0$. Let $$E_0=\{(\{a,a'\},\{b,b'\})\in C(\R^n,2)\times C(\R^n,2):(a'-a)\cdot (b'-b)\ne0\}.$$ There is a continuous geodesic motion-planning rule on $E_0$.\label{E0}\end{prop}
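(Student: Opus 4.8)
\textbf{Proof proposal for Proposition \ref{E0}.}

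The plan is first to settle the ``iff'' statement, which is immediate from the computation already recorded in \eqref{bb}: expanding $d((a,a'),(b,b'))^2-d((a,a'),(b',b))^2$ telescopes to $2(a'-a)\cdot(b-b')=-2(a'-a)\cdot(b'-b)$, so the two squared distances agree exactly when $(a'-a)\cdot(b'-b)=0$. Thus on $E_0$ the $\min$ in the definition of $d_U$ is \emph{strictly} achieved by exactly one of the two orderings, and that ordering is locally constant on $E_0$.

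The main content is the construction of the motion-planning rule on $E_0$. Given a representative $(\{a,a'\},\{b,b'\})\in E_0$, I would choose the labelling of $\{b,b'\}$ that realizes the minimum in $d_U$; by the strictness just noted this choice is unambiguous and continuous in a neighborhood. With that labelling fixed, Theorem \ref{Cthm} (and the observation in its proof that the only obstruction to the linear path lying in $F(\R^n,2)$ is $b'-b$ being a negative multiple of $a'-a$, which is excluded here since that forces $(a'-a)\cdot(b'-b)<0\ne0$, a case already handled) guarantees that the straight-line path $t\mapsto \{(1-t)a+tb,\ (1-t)a'+tb'\}$ stays in $C(\R^n,2)$ and is a geodesic for $d_U$. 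So I would define $s_0(\{a,a'\},\{b,b'\})$ to be the image in $PC(\R^n,2)$ of this linear path. Continuity of $s_0$ follows because the endpoints vary continuously, the choice of labelling is locally constant on $E_0$, and linear interpolation is continuous in its data; one should phrase this using the quotient description $(\R^n-\{0\})/{\sim}$ from the proof of Theorem \ref{Cthm} to see that the path descends continuously to the quotient. Hence $E_0$ carries a continuous geodesic motion-planning rule.

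The only subtlety — the step I would be most careful about — is verifying that $s_0$ is genuinely well defined on $E_0$ as a set of \emph{unordered} pairs, i.e.\ independent of which representatives $(a,a')$, $(b,b')$ one starts from. Swapping $a\leftrightarrow a'$ forces the minimizing labelling of $\{b,b'\}$ to swap as well, and the resulting linear path is the same unordered path; swapping $b\leftrightarrow b'$ is handled identically. Since the minimizing labelling is pinned down by the strict inequality on $E_0$, there is no monodromy obstruction, and $s_0$ is well defined and continuous. (Globally, of course, the two labellings cannot be separated, which is exactly why the complement of $E_0$ is needed and why $\GC(C(\R^n,2))$ is not in general zero; that is pursued in the proof of Theorem \ref{GCCthm}.)
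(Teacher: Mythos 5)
Your proposal is correct and follows essentially the same route as the paper: the ``iff'' is read off from the computation in (\ref{bb}), and on $E_0$ the strictly minimizing labelling is unique and locally constant, so the corresponding linear path (which lies in $F(\R^n,2)$ by the argument in the proof of Theorem \ref{Cthm}) gives a continuous geodesic motion-planning rule. One small wording point: the negative-multiple degeneracy is ruled out not because $(a'-a)\cdot(b'-b)<0$ would put the pair outside $E_0$ (it would not), but because, by (\ref{bb}), the minimizing labelling always satisfies $(a'-a)\cdot(b'-b)>0$ — which is the reason your argument in fact supplies via ``a case already handled.''
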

\begin{proof} The first part follows as in (\ref{bb}).
At each point of $E_0$, there is a unique choice of linear geodesic whose length equals $d_U(\{a,a'\},\{b,b'\})$, varying continuously with the point of $E_0$.
\end{proof}

Now we can prove Theorem \ref{GCCthm} about $\GC(C(\R^n,2))$.
\begin{proof}[Proof of Theorem \ref{GCCthm}]
Let $E_1=C(\R^n,2)\times C(\R^n,2)-E_0$, with $E_0$ as in Proposition \ref{E0}. We need to describe subsets of $E_1$ on which we can make a continuous choice of whether to go from $(a,a')$ to $(b,b')$, or  to $(b',b)$. Let $t_n=\TC(RP^{n-1})$. One motion-planning rule for $RP^{n-1}$ is on the domain $D_0$ consisting of all pairs $(\ell_0,\ell_1)$ such that $\ell_0\cdot\ell_1\ne0$. On $D_0$, rotate from $\ell_0$ to $\ell_1$ in their plane through the smaller arc ($<\pi/2$).
Suppose $D_i$ is one of the other $t_n$ subsets of $RP^{n-1}\times RP^{n-1}$ on which there is a motion-planning rule $s_i$. For $(\ell_0,\ell_1)\in D_i$, $s_i$ associates to an orientation (i.e., direction) on $\ell_0$ an orientation on $\ell_1$ by using the path from $\ell_0$ to $\ell_1$ specified by $s_i$. This association of orientations is continuous on $D_i$.

Let $E_i\subset E_1$ denote all $(\{a,a'\},\{b,b'\})$ such that, if $\ell_0$ is the line through $a$ and $a'$, translated to pass through $\overline 0$, and $\ell_1$ the translated line passing through $b$ and $b'$, then $(\ell_0,\ell_1)\in D_i$.
If $a>a'$ under the orientation of $\ell_0$, choose  the linear geodesic from $(a,a')$ to $(b,b')$, where $b>b'$ under the associated orientation of $\ell_1$. This is a continuous choice on $E_i$.

Thus we have partitioned $C(\R^n,2)\times C(\R^n,2)$ into $t_n+1$ subsets on which we have geodesic motion planning rules, so $\GC(C(\R^n,2))\le\TC(RP^{n-1})$. Since, from (\ref{he}), $C(\R^n,2)$ has the homotopy type of $RP^{n-1}$, we have the following string of inequalities, which imply the claimed equality.
$$\GC(C(\R^n,2))\ge\TC(C(\R^n,2))=\TC(RP^{n-1})\ge\GC(C(\R^n,2)).$$
\end{proof}
Note that our argument did not use that the motion-planning rules of \cite{FTY} can be chosen to be geodesics.

\section{A different metric}\label{sec4}
There is an obvious homeomorphism $F(\R^n,2)\to \R^n\times S^{n-1}\times \R^+$, where $\R^+=(0,\infty)$, given by
$$(a,a')\mapsto \biggl(\frac{a'+a}2, \frac{a'-a}{\|a'-a\|}, \frac{\|a'-a\|}2\biggr).$$
In the notation of Theorem \ref{mainthm}, with $\hh=h/\|h\|$, it is
$(a,a')\mapsto(A,\hh,\|h\|)$.
Here $A$ is the midpoint, and $h$ the directed segment from the midpoint to the second point.
The inverse sends $(A,u,r)$ back to $(A-ru,A+ru)$.

We use the Euclidean metric on $\R^n$ and $\R^+$, and arclength metric $d_S$ on $S^{n-1}$, and the product metric on their product to obtain a metric $d'$ on $F(\R^n,2)$ which is geodesically complete. The formula, with $B$ and $k$ also as in Theorem \ref{mainthm}, is
$$d'((a,a'),(b,b'))=\sqrt{\|B-A\|^2+d_S(\hh,\kk)^2+(\|k\|-\|h\|)^2}.$$
The unique geodesic from $(a,a')$ to $(b,b')$, if $d_S(\hh,\kk)<\pi$, is
$$t\mapsto ((1-t)A+tB-((1-t)\|h\|+t\|k\|)u(t),(1-t)A+tB+((1-t)\|h\|+t\|k\|)u(t)),$$
where, similarly to Proposition \ref{bdrylem}, with $\a=d_S(\hh,\kk)$, $$u(t)=\frac{\sin((1-t)\a)\hh+\sin(t\a)\kk}{\sin\a}.$$
If $d_S(\hh,\kk)=\pi$, we use vector fields on $S^{n-1}$ or $S^{n-1}-\{x_0\}$ to choose geodesics, and obtain an analogue of Theorem \ref{GCthm} for $F(\R^n,2)$ in this metric.
Figure \ref{paths} shows the path obtained in this way between the points that we used in the first example of Section \ref{2sec}.

\bigskip
\begin{minipage}{6.5in}
\begin{fig}
\label{paths}
{\bf Geodesic in $F(\R^2,2)$ using ``different'' metric}
\begin{center}
\includegraphics[width=4.5in,height=4.5in]{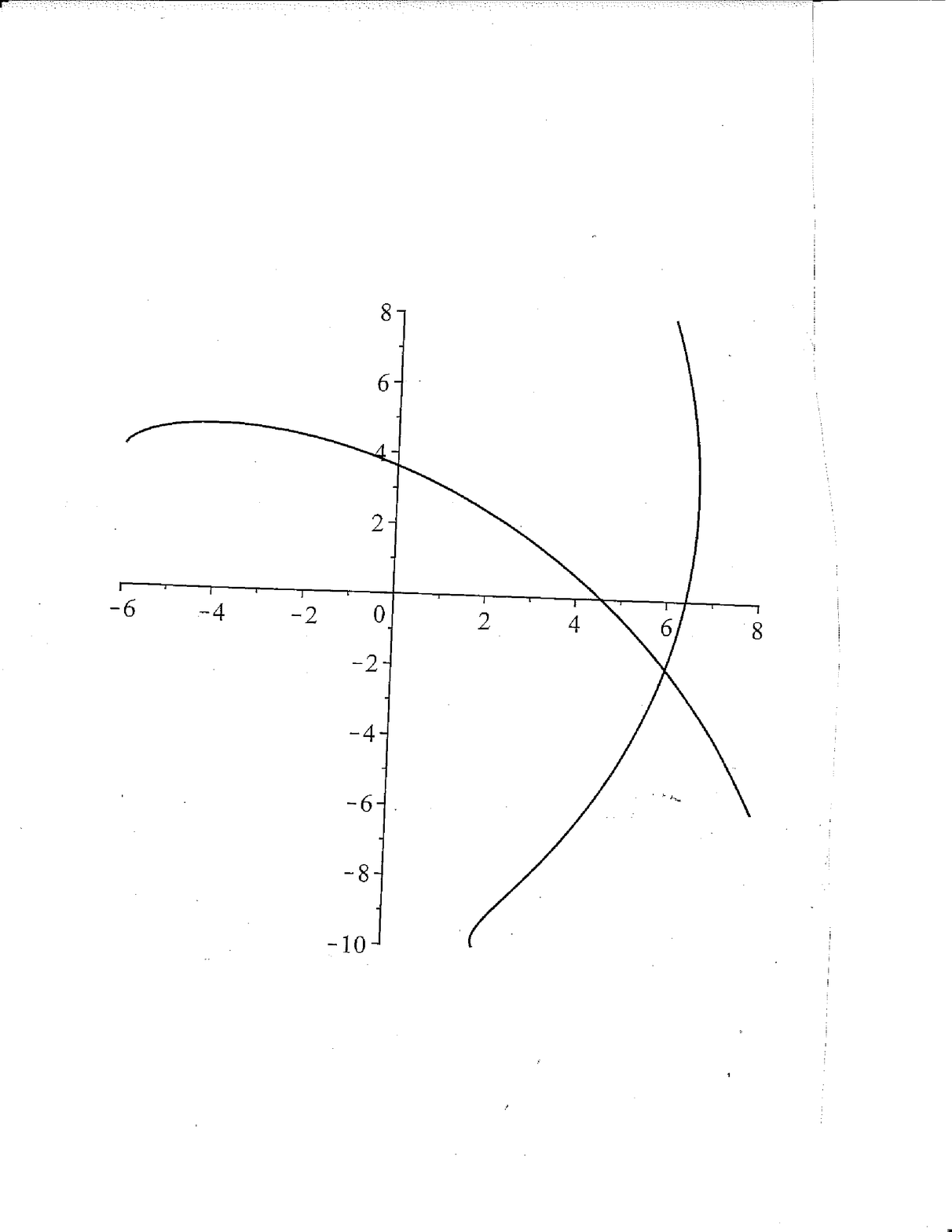}
\end{center}
\end{fig}
\end{minipage}
\bigskip

There is an analogous metric on $C(\R^n,2)$, but we will not discuss it here because the Euclidean-related metric considered earlier was already geodesically complete and highly satisfactory.

\def\line{\rule{.6in}{.6pt}}

\end{document}